\newtheorem{thm}{Theorem}[section]
\newtheorem{lem}[thm]{Lemma}
\newtheorem{prop}[thm]{Proposition}
\theoremstyle{definition}
\newtheorem{defn}[thm]{Definition}
\theoremstyle{remark}
\newtheorem{rem}[thm]{Remark}
\DeclareMathOperator{\supp}{supp}
\DeclareMathOperator{\Dom}{Dom}
\newcommand{\vertiii}[1]{{\left\vert\kern-0.25ex\left\vert\kern-0.25ex\left\vert #1 
    \right\vert\kern-0.25ex\right\vert\kern-0.25ex\right\vert}}
\numberwithin{equation}{section}
\begin{document}
    \title[Higher order Gaussian Riesz transforms]
    {Endpoint estimates for higher order Gaussian Riesz transforms}

    \author[F. Berra]{Fabio Berra}
    \address{Fabio Berra\newline
        Universidad Nacional del Litoral, FIQ. Research of CONICET.\newline Santiago del Estero 2829,\newline S3000AOM, Santa Fe, Argentina}
    \email{fabiomb08@gmail.com}

    \author[E. Dalmasso]{Estefan\'ia Dalmasso}
    \address{Estefan\'ia Dalmasso\newline
        Instituto de Matem\'atica Aplicada del Litoral, UNL, CONICET, FIQ.\newline Colectora Ruta Nac. Nº 168, Paraje El Pozo,\newline S3007ABA, Santa Fe, Argentina}
    \email{edalmasso@santafe-conicet.gov.ar}

    \author[R. Scotto]{Roberto Scotto}
    \address{Roberto Scotto\newline
        Universidad Nacional del Litoral, FIQ.\newline Santiago del Estero 2829,\newline S3000AOM, Santa Fe, Argentina}
    \email{roberto.scotto@gmail.com}

    \thanks{The authors were supported by PICT 2019-0389 (ANPCyT), CAI+D 2020 50320220100210 (UNL) and PIP 11220200101916CO (CONICET)}
    \date{\today}
    \subjclass[2020]{42B20, 42B30, 42B35}

    \keywords{Endpoint estimates, Riesz transforms, Gaussian measure, Ornstein-Uhlenbeck operator, Hardy spaces}

    %%% ----------------------------------------------------------------------
    \begin{abstract}
    We will show that, contrary to the behavior of the higher order Riesz transforms studied so far on the atomic Hardy space $\mathcal{H}^1(\mathbb R^n, \gamma)$, associated with the Ornstein-Uhlenbeck operator with respect to the $n$-dimensional Gaussian measure $\gamma$, the new Gaussian Riesz transforms are bounded from $\mathcal{H}^1(\mathbb R^n, \gamma)$ to $L^1(\mathbb R^n, \gamma)$, for any order and dimension $n$. We will also prove that the classical Gaussian Riesz transforms of higher order are bounded from an adequate subspace of $\mathcal{H}^1(\mathbb R^n, \gamma)$ into $L^1(\mathbb R^n, \gamma)$, extending Bruno's result (J. Fourier Anal. Appl. 25, 4 (2019), 1609--1631) for the first order case. 
    \end{abstract}
    %%% ----------------------------------------------------------------------
    \date{\today}
    \maketitle
    %%% ----------------------------------------------------------------------

\section{Introduction}

For $x\in \mathbb{R}^n,$ let $d\gamma(x)=\pi^{-n/2}e^{-|x|^2}dx$ be the $n$-dimensional non-standard Gaussian measure and let~$\mathcal{L}$ be the closure  on $L^2(\gamma)$ of the Ornstein-Uhlenbeck differential operator given by
\[L=-\frac{1}{2}\Delta +x\cdot \nabla=\sum_{i=1}^n \delta^*_i \delta_i,\]
where $\delta_i=\frac{1}{\sqrt{2}}\frac{\partial}{\partial x_i}$, and $\delta^*_i =-\frac{1}{\sqrt{2}}e^{|x|^2}\frac{\partial }{\partial x_i}\left(e^{-|x|^2}\cdot\right)$ is the formal adjoint of $\delta_i$ on $L^2(\gamma).$ This operator $L$ is 
 defined on the space $C_c^\infty(\mathbb{R}^n)$ of smooth and compactly supported functions on $\mathbb{R}^n.$ It is well known that $\mathcal{L}$ is an unbounded positive self-adjoint operator on $L^2(\gamma)$. Its spectrum is discrete composed by non-negative integers as eigenvalues whose eigenfunctions are the normalized $n$-dimensional Hermite polynomials $\{h_\alpha\}_{\alpha\in \mathbb{N}_0^n}$ which turn out to be an orthonormal basis on $L^2(\gamma).$ That is, $\mathcal{L}h_\alpha=|\alpha|h_\alpha$ with $|\alpha|=\sum_{i=1}^n\alpha_i$ and $\textup{Dom}(\mathcal{L})=\{f\in L^2(\gamma):\sum_{\alpha\in \mathbb{N}_0^n}|\alpha|^2| \langle f, h_\alpha\rangle|^2<\infty\}.$ Let us observe also that $C_c^\infty(\mathbb{R}^n)\subseteq \textup{Dom}{(\mathcal{L})}$ and $\mathcal{L}=L$ on $C_c^\infty (\mathbb{R}^n).$

We define two types of higher order Gaussian Riesz transforms, known in the literature as the ``old'' and the ``new'' ones. First, let us note that these transforms can be spectrally defined as follows: for a multi-index ${\alpha\in \mathbb{N}_0^n}\setminus \{(0,\dots,0)\}$, the ``old'' Gaussian Riesz transforms of order $\alpha$ are given by
\[R_\alpha = D^\alpha \mathcal{L}^{-\frac{|\alpha|}{2}}\] being $D^\alpha=\delta_1^{\alpha_1}\ldots \delta_n^{\alpha_n};$ and the ``new'' ones have the form
\[R_\alpha^*=D^{*\alpha}(\mathcal{L}+I)^{-\frac{|\alpha|}{2}}\] being $D^{*\alpha}=\delta_1^{*\alpha_1}\ldots \delta_n^{*\alpha_n}$ and $I$ the identity operator on $L^2(\gamma).$  

Our main aim is to analyze the continuity of these singular integrals on $L^1(\gamma).$ It is well known, as it happens in the classical context for the Laplacian operator, that these transforms are not bounded on $L^1(\gamma).$ And as it is noted in \cite{MMS13} the ``old'' Riesz transforms are not bounded neither from the Gaussian atomic Hardy space $\mathcal{H}^1(\gamma)$ (given in \cite{MM07}) into $L^1(\gamma),$ for $n>1.$ There was a satisfactory answer of this matter, provided by T. Bruno in \cite{Bruno}, in the case of the first order old Gaussian Riesz transforms, which are bounded from a certain subspace $X^1(\gamma)\subseteq  \mathcal{H}^1(\gamma)$ into $L^1(\gamma)$. Now we shall complete this study for higher order old Gaussian Riesz transforms by using smaller subspaces as we increase the order of the transform, and at the same time we prove the boundedness of the new ones from $\mathcal{H}^1(\gamma)$ into $L^1(\gamma).$

To that end, we will first need the concept of atom to introduce the corresponding atomic Hardy space $\mathcal{H}^1(\gamma)$  later.
Given $r\in (1,\infty],$ a Gaussian $(1,r)$--atom is either the constant function $1$ or
a function $a \in L^r(\gamma)$ supported in an admissible ball $B$ (see its definition in Section~\ref{seccion: preliminares}) such that
\begin{equation*}
\int a\, d\gamma=0 \quad\text{and}\quad  \|a\|_r\le \gamma(B)^{\frac{1}{r}-1}.
\end{equation*}
From now on, the symbol $\|\cdot\|_r$ denotes the norm in $L^r(\gamma).$ In the latter case, we
say that the atom $a$ is associated to the ball $B.$

The space $\mathcal{H}^{1,r}(\gamma)$ is then the vector space of all functions $f \in L^1(\gamma)$ that admit a decomposition of the form
$\sum_j \lambda_j a_j$,
where the $a_j$ are Gaussian $(1,r)$--atoms and the series associated to the sequence of complex numbers $\{\lambda_j\}$
is absolutely convergent. The norm of $f$ in $\mathcal{H}^{1,r}(\gamma)$ is defined as the infimum of $\sum_j |\lambda_j|$ over
all these representations of $f$.

In \cite{MM07} and \cite{MMS13} the spaces $\mathcal{H}^{1,r}(\gamma)$ were defined and proved to coincide for all $1 < r \le \infty,$ 
with equivalent norms. So any one of them is called $\mathcal{H}^1(\gamma).$ In view of these facts, we shall refer to the atoms in $\mathcal{H}^1(\gamma)$ as $\mathcal{H}^1$--atoms.

In Section \ref{seccion: preliminares} we will introduce a new Hardy space $X^k(\gamma)$ for each $k\in \mathbb{N}$. The sequence of these new Hardy spaces form a strictly decreasing chain such that $X^{k+1}(\gamma)\subsetneq X^{k}(\gamma)\subsetneq \mathcal{H}^1(\gamma).$ And these spaces will be suitable to the boundedness of $R_\alpha$ when $p=1$, where the parameter $k$ is related to the order $\alpha$ as we can see below.

We state our two main theorems which take care of the boundedness of these Gaussian Riesz transforms on $L^1(\gamma).$

\begin{thm}\label{thm: teo principal}
$R_\alpha$ is bounded from $X^k(\gamma)$ to $L^1(\gamma)$, for any multi-index $\alpha$ with $k=|\alpha|$, and any dimension.
\end{thm}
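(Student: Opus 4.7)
By the atomic description of $X^k(\gamma)$ and linearity, it suffices to prove a uniform bound $\|R_\alpha a\|_{L^1(\gamma)}\le C$ for every $X^k$-atom $a$. The constant atom $a\equiv 1$ is disposed of at once, since the spectral definition $R_\alpha=D^\alpha\mathcal{L}^{-|\alpha|/2}$ annihilates the zeroth Hermite polynomial. For a non-constant atom supported in an admissible ball $B=B(x_0,r)$, I would first write out the kernel representation
\[
R_\alpha f(x)=\int K_\alpha(x,y)\,f(y)\,d\gamma(y),\qquad K_\alpha(x,y)=\frac{1}{\Gamma(|\alpha|/2)}\int_0^\infty t^{|\alpha|/2-1}\,D^\alpha_x M_t(x,y)\,dt,
\]
obtained by combining subordination with the Mehler kernel $M_t$ of $e^{-t\mathcal{L}}$. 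The required bounds for $R_\alpha a$ will come from careful size/regularity estimates of $K_\alpha$ and of its derivatives in $y$.

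The second step is the standard local/global split adapted to the Gaussian geometry. One enlarges $B$ to an admissible ball $B^{*}$, writes $R_\alpha a=R_\alpha a\cdot\chi_{B^{*}}+R_\alpha a\cdot\chi_{(B^{*})^c}$, and estimates the local piece using the known $L^2(\gamma)$-boundedness of $R_\alpha$: by Cauchy--Schwarz and the atomic size estimate $\|a\|_2\le\gamma(B)^{-1/2}$, the local term is controlled by $\gamma(B^{*})^{1/2}\|a\|_2\lesssim 1$, thanks to admissibility which forces $\gamma(B^{*})\lesssim\gamma(B)$.

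For the global piece, the cancellation built into the $X^k$-atoms (vanishing moments up to order $k-1$ against suitable polynomials) lets me subtract a Taylor polynomial of $K_\alpha(x,\cdot)$ around $y_0$:
\[
R_\alpha a(x)=\int_{B}\Bigl[K_\alpha(x,y)-T^{y_0}_{k-1}K_\alpha(x,\cdot)(y)\Bigr]a(y)\,d\gamma(y),\qquad x\in(B^{*})^c.
\]
The Taylor remainder is bounded by $|y-y_0|^{k}\sup_{|\beta|=k,\xi\in B}|\partial_y^{\beta} K_\alpha(x,\xi)|$. Substituting the Mehler-based expression for $K_\alpha$ and splitting the $t$-integral into the small-$t$ region (which carries the singular part, tamed by the $k$-th order cancellation through a change of variables $s=t/(1+t)$ or $u=\sqrt{t}$) and the large-$t$ region (which decays exponentially thanks to the Gaussian weight and the factor $e^{-t}$ inside $M_t$), one should obtain a majorant of the form $r^k\,\Phi_\alpha(x,x_0)$ whose integral against $d\gamma(x)$ is bounded, uniformly in $B$, after inserting $|y-y_0|\le r$ and the atomic size control $\|a\|_1\le 1$.

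\textbf{Main obstacle.} The delicate step is proving the sharp pointwise bound for the $k$-th order $y$-derivatives of $K_\alpha$ in the global region and integrating the resulting majorant against the Gaussian measure. The combinatorics of differentiating the Mehler kernel $|\alpha|$ times in $x$ and an additional $k=|\alpha|$ times in $y$ produces polynomial expressions of growing degree multiplied by the Gaussian factors; one must carefully exploit the admissibility condition defining the cut-off scale, and track how the extra cancellation order $k$ exactly compensates the worsened singularity of $R_\alpha$ for higher $|\alpha|$. This matching of cancellation order with derivative order is precisely what motivates the choice $k=|\alpha|$ in the statement and is where the argument of Bruno for $|\alpha|=1$ must be genuinely extended rather than merely iterated.
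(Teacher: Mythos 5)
There is a genuine gap, and it sits exactly where you flagged your ``main obstacle'' --- but the problem is not only that the key estimate is unproven; the cancellation you plan to use is not the one the atoms actually have. You assume an $X^k$--atom has ``vanishing moments up to order $k-1$ against suitable polynomials'' and accordingly subtract a Taylor polynomial $T^{y_0}_{k-1}K_\alpha(x,\cdot)$ of the kernel. But the defining cancellation of an $X^k$--atom is $a\in q_k^2(\overline{B})^{\perp}$, i.e.\ orthogonality to functions $u$ with $\mathcal{L}^k u$ constant on a neighbourhood of $\overline{B}$. Non-constant polynomials of degree $\le k-1$ are not $k$--quasi-harmonic for the Ornstein--Uhlenbeck operator (e.g.\ $\mathcal{L}^k h_\beta=|\beta|^k h_\beta$ is not locally constant), so $\int_B a(y)(y-y_0)^{\beta}\,d\gamma(y)$ need not vanish for $1\le|\beta|\le k-1$, and the Taylor subtraction is not justified. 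Moreover, the precedent of Mauceri--Meda--Sj\"ogren (that $R_\alpha$ with $|\alpha|\ge 2$ fails to map $\mathcal{H}^1(\gamma)$ into $L^1(\gamma)$ for $n>1$) shows that in the Gaussian setting the obstruction is global rather than a local kernel singularity, so it is far from clear that higher-order moment cancellation plus pointwise bounds on $\partial_y^{\beta}K_\alpha$ would close the argument even if the moments did vanish.

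The paper exploits the actual atomic condition quite differently. Orthogonality to $q_k^2(\overline{B})$ yields (Proposition~\ref{propo: soporte de L-k y su norma 2}) that $\mathcal{L}^{-k}a$ is again supported in $\overline{B}$ with $\|\mathcal{L}^{-k}a\|_{L^2(\gamma)}\lesssim r_B^{2k}\gamma(B)^{-1/2}$. For $k$ even this already finishes the proof: $R_\alpha a=\nabla^{k}\mathcal{L}^{-k/2}a$ is then itself supported in $\overline{B}$ (differential operators are local), and H\"older plus $L^2$--boundedness give the uniform bound with no global term at all. For $k$ odd one factors $R_\alpha a=D^{\alpha}\mathcal{L}^{k/2}\bigl(\mathcal{L}^{-k}a\bigr)$ and, off $4B$, uses only a \emph{size} estimate for the kernel of the positive-order operator $D^{\alpha}\mathcal{L}^{k/2}$ acting on functions supported in $\overline{B}$ (Lemma~\ref{lem: lema para teo principal - 2}), namely $\|D^{\alpha}\mathcal{L}^{k/2}f\|_{L^1((4B)^c,\gamma)}\lesssim r_B^{-2k}\|f\|_{L^1(B,\gamma)}$; the gain $r_B^{2k}$ from the atom exactly cancels this loss. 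No Taylor expansion or kernel regularity in $y$ is needed. Your local piece (Cauchy--Schwarz plus $L^2$--boundedness and doubling on admissible balls) and the reduction to atoms do match the paper, though you should also note that summing the atomic series requires the weak-type $(1,1)$ bound of $R_\alpha$ from $L^1(w_k\gamma)$, and that constants are not $X^k$--atoms, so there is no constant atom to dispose of.
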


\begin{thm}\label{thm: teo principal 2}
$R_\alpha^*$ is bounded from $\mathcal{H}^1(\gamma)$ to $L^1(\gamma)$, for any multi-index $\alpha$ and any dimension. 
\end{thm}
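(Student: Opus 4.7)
The plan is to reduce the statement to a uniform bound on atoms: since $\mathcal{H}^{1,r}(\gamma)=\mathcal{H}^1(\gamma)$ with equivalent norms for all $1<r\le\infty$, it is enough to exhibit a constant $C>0$ (independent of the atom) such that $\|R^*_\alpha a\|_1\le C$ for every Gaussian $(1,\infty)$-atom $a$. The constant atom is handled directly: since the Hermite polynomial $h_0\equiv 1$ satisfies $\mathcal{L} h_0=0$, we get $(\mathcal{L}+I)^{-|\alpha|/2}1=1$, so $R^*_\alpha 1=D^{*\alpha}1$, which is a polynomial (a Hermite polynomial up to a constant) and hence belongs to $L^1(\gamma)$ since the Gaussian measure has all polynomial moments finite. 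So we may restrict attention to a non-constant atom $a$ supported in an admissible ball $B$ with $\int a\,d\gamma=0$ and $\|a\|_\infty\le\gamma(B)^{-1}$.

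For such an atom we use the subordination formula
\[
R^*_\alpha f=\frac{1}{\Gamma(|\alpha|/2)}\int_0^\infty t^{|\alpha|/2-1}e^{-t}\,D^{*\alpha}e^{-t\mathcal{L}}f\,dt,
\]
combined with the Mehler kernel representation of $e^{-t\mathcal{L}}$, to write $R^*_\alpha a(x)=\int K^*_\alpha(x,y)a(y)\,d\gamma(y)$, where
\[
K^*_\alpha(x,y)=\frac{1}{\Gamma(|\alpha|/2)}\int_0^\infty t^{|\alpha|/2-1}e^{-t}\bigl(D^{*\alpha}_x M_t(\cdot,y)\bigr)(x)\,dt.
\]
I would then split $\mathbb{R}^n=2B\cup(2B)^c$. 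For the local piece, Cauchy--Schwarz gives $\int_{2B}|R^*_\alpha a|\,d\gamma\le \gamma(2B)^{1/2}\|R^*_\alpha a\|_2\le C\gamma(2B)^{1/2}\|a\|_2$; the $L^2(\gamma)$-boundedness of $R^*_\alpha$ is immediate from the spectral definition, since on the Hermite basis the multipliers $(|\beta|+1)^{-|\alpha|/2}$ absorb the unbounded growth of $D^{*\alpha}$ in a uniformly bounded way. Using $\|a\|_2\le\gamma(B)^{-1/2}$ and the admissibility estimate $\gamma(2B)\le C\gamma(B)$ settles the local part.

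For the global piece, by Fubini it suffices to establish the uniform kernel estimate
\[
\int_{(2B)^c}|K^*_\alpha(x,y)|\,d\gamma(x)\le C\quad\text{uniformly in }y\in B,
\]
after which $\int_{(2B)^c}|R^*_\alpha a|\,d\gamma\le \|a\|_1\cdot C\le C$, so the cancellation of $a$ is not required at this stage. The whole game is to control the kernel. One writes $D^{*\alpha}_x M_t(x,y)$ using the explicit form of $\delta_i^{*}$: each application of $\delta_i^*$ produces a factor involving $x$ and derivatives of the Mehler kernel, yielding polynomial growth in $x$ of degree up to $|\alpha|$. The decisive feature of the new Riesz transforms, absent from the old ones, is the additional exponential factor $e^{-t}$ that makes the large-$t$ tail harmless in spite of this polynomial growth and in spite of the fact that $M_t(x,y)\to\pi^{-n/2}e^{-|x|^2}$ as $t\to\infty$.

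The main obstacle is thus a delicate case analysis in $t$ together with sharp pointwise bounds on $D^{*\alpha}_x M_t(x,y)$. I expect to split the $t$-integral into three regimes: for small $t$ one exploits the Gaussian off-diagonal decay of $M_t(x,y)$ when $x\in(2B)^c$ and $y\in B$ (using the definition of admissibility to quantify the separation); for intermediate $t$ one controls the polynomial factors by the gaussian weight of $d\gamma(x)$; and for large $t$ the extra $e^{-t}$ absorbs everything. Combining the three regimes, the expected bound is uniform in $y\in B$ and independent of the chosen admissible ball, which together with the local estimate and the constant-atom case yields the theorem.
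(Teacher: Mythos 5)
Your reduction to atoms, your treatment of the constant atom, and your local estimate on $2B$ (Cauchy--Schwarz, $L^2(\gamma)$-boundedness of $R^*_\alpha$, doubling on admissible balls) all match the paper. The gap is in the global term, precisely at the point where you announce that ``the cancellation of $a$ is not required at this stage'' and reduce everything to the size estimate $\int_{(2B)^c}|K^*_\alpha(x,y)|\,d\gamma(x)\le C$ uniformly in $y\in B$. That estimate is not true uniformly over admissible balls: $R^*_\alpha=D^{*\alpha}(\mathcal{L}+I)^{-|\alpha|/2}$ is an operator of order zero, so near the diagonal its kernel behaves like $|x-y|^{-n}$ (in the kernel written in the paper this is the contribution of $r$ close to $1$), and for a small admissible ball the integral over $(2B)^c$ picks up all scales between $r_B$ and $m(|c_B|)$, producing a factor of order $\log(1/r_B)$. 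Concretely, after cancelling $e^{|x|^2}$ against $d\gamma(x)$ and bounding $|H_\alpha(u)|e^{-|u|^2}\lesssim e^{-|u|^2/2}$, the inner Gaussian integral over $(2B)^c$ is controlled by $e^{-cr_B^2/(1-r^2)}$, and
\[
\int_{1/2}^{1}\frac{e^{-cr_B^2/(1-r^2)}}{1-r^2}\,dr\approx\log\frac{1}{r_B},
\]
which blows up as $r_B\to0$. So a pure size bound on the kernel cannot close the argument; this is the same phenomenon as for the classical Riesz transforms, where $\int_{(2B)^c}|K(x,y)|\,dx$ diverges and one must use the H\"ormander condition together with the vanishing mean of the atom.

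The paper's proof does exactly that: using $\int_B a\,d\gamma=0$ it replaces $k^*_\alpha(x,y)$ by $k^*_\alpha(x,y)-k^*_\alpha(x,c_B)$ inside the integral, applies the mean value theorem in $y$, and proves the smoothness estimate
\[
\sup_{B\in\mathscr{B}_1}\sup_{y\in B}\; r_B\int_{(2B)^c}\int_0^1\frac{\lambda_\alpha(r)}{(1-r^2)^{n/2+1}}\,\bigl|\nabla_y F_\alpha(x,y,r)\bigr|\,dr\,dx\lesssim 1,
\]
where the inner integral alone is of size $1/r_B$ (the $y$-gradient costs an extra $(1-r^2)^{-1/2}$) and the factor $|y-c_B|\le r_B$ from the mean value theorem exactly compensates; the estimate of the resulting integral is carried out with Lemma~\ref{lem: lema para teo principal - 1} through the same case analysis in $r_{B,y}$ that you sketch for the $t$-integral. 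Your three-regime splitting of the subordination parameter is the right kind of machinery for the \emph{gradient} of the kernel, but applied to the kernel itself it cannot produce a uniform constant. To repair your argument, reinstate the cancellation of the atom and prove the H\"ormander-type bound above in place of your size bound.
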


The article is organized as follows. In Section~\ref{seccion: preliminares} we introduce some notation, definitions and properties of $\mathcal{L}$, whereas in Section~\ref{seccion: atomos y hardys} we establish the definitions of atoms and Hardy type spaces in the Gaussian framework. In order to prove our main theorems in Section~\ref{seccion: resultados principales}, we give previously several auxiliary results in Section~\ref{seccion: auxiliares}.

\section{Preliminaries} \label{seccion: preliminares}

For the operator $\mathcal{L}$ introduced before, and for every $z\in \mathbb{C}$ we define 
\begin{align*}
\mathcal{L}^z=\sum_{j=1}^\infty j^z \mathcal{P}_j, &\ \  \Dom(\mathcal{L}^z)=\left\{f\in L^2(\gamma):\sum_{j=1}^\infty j^{2\text{Re}\, z} \|\mathcal{P}_j f\|_2^2<\infty\right\},
\end{align*}
being $\mathcal{P}_j$ the orthogonal projection onto the Wiener chaos space of order $j$, i.e. $\mathcal{P}_j f=\sum_{|\alpha|=j} \langle f, h_\alpha\rangle h_\alpha$ with $h_\alpha $ the normalized Hermite polynomial of degree~$|\alpha|=j,$ where $\alpha\in \mathbb N_0^n$. Let us remark that $\mathcal{L}^1=\mathcal{L}.$

For $f\in \Dom(\mathcal{L}),$ we have
\[\mathcal{L}f(x)=\sum_{j=1}^\infty j\, \mathcal{P}_j f(x).\]
 Recall that the family of orthonormalized Hermite polynomials $\{h_\alpha\}_{\alpha\in \mathbb N_0^n}$ is an orthonormal basis on $L^2(\gamma)$ (see, for instance, \cite{Urbina}).
Thus, we can rephrase \[\Dom(\mathcal{L})=\left\{f\in L^2(\gamma): \mathcal{L}f\in L^2(\gamma)\right\}.\]

If $\text{Re}\, z<0,$ the operator $\mathcal{L}^z$ turns out to be bounded on $L^2(\gamma)$ and we have $\Dom(\mathcal{L}^z)=L^2(\gamma).$ Meanwhile, if $\text{Re}\, z\ge 0$, then $C_c^\infty(\mathbb{R}^n)\subseteq  \Dom(\mathcal{L}^z)$ by the decomposition $\mathcal{L}^z= \mathcal{L}^{z-N}\mathcal{L}^N$ with ${N=[\text{Re}\, z]+1.}$

Let $\Pi_0$ be the orthogonal projection 
\[\Pi_0 : L^2(\gamma)\to \text{ker}(\mathcal{L})^\perp=\left\{f\in L^2(\gamma): \int_{\mathbb{R}^n} f d\gamma=0\right\}.\]

In terms of the spectral resolution $\{\mathcal{P}_j\}_{j\ge 0},$ $\Pi_0=I-\mathcal{P}_0$ since \[\mathcal{P}_0:L^2(\gamma)\to \text{ker}(\mathcal{L})\equiv{\mathbb{C}},\]
with \[\mathcal{P}_0 f=\int_{\mathbb{R}^n} f\, d\gamma.\]

We shall denote the space $\Pi_0(L^2(\gamma))$ as $L_0^2(\gamma).$

\begin{lem}
For every  positive integer $k$ we have
\[\mathcal{L}^k\mathcal{L}^{-k}f=\Pi_0 f \quad \forall f\in L^2(\gamma), \quad \mathcal{L}^{-k}\mathcal{L}^k f=\Pi_0 f\quad \forall f \in \Dom(\mathcal{L}^k).\]
\end{lem}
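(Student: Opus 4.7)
The plan is to expand everything in the orthonormal Hermite basis, using the definition $\mathcal{L}^z=\sum_{j=1}^\infty j^z \mathcal{P}_j$ given above. The crucial feature is that the sum starts at $j=1$: the zero-chaos (i.e.\ the mean) is annihilated by every $\mathcal{L}^z$, so both compositions should return $f$ minus its mean, which is exactly $\Pi_0 f = (I-\mathcal P_0)f$.

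For the first identity, let $f\in L^2(\gamma)$. Since $\text{Re}(-k)<0$, the operator $\mathcal{L}^{-k}$ is bounded on $L^2(\gamma)$ and
\[
g:=\mathcal{L}^{-k}f=\sum_{j=1}^\infty j^{-k}\mathcal{P}_j f,
\]
with the series converging in $L^2(\gamma)$ by orthogonality. I would then verify the membership $g\in \Dom(\mathcal{L}^k)$ by computing $\mathcal{P}_j g = j^{-k}\mathcal{P}_j f$ for $j\ge 1$ and $\mathcal{P}_0 g=0$, so that
\[
\sum_{j=1}^\infty j^{2k}\|\mathcal{P}_j g\|_2^2=\sum_{j=1}^\infty \|\mathcal{P}_j f\|_2^2\le \|f\|_2^2<\infty.
\]
Applying $\mathcal{L}^k$ term by term (justified by the $L^2$-convergence of the defining series on $\Dom(\mathcal{L}^k)$) then gives
\[
\mathcal{L}^k\mathcal{L}^{-k}f=\sum_{j=1}^\infty j^k\cdot j^{-k}\mathcal{P}_j f=\sum_{j=1}^\infty \mathcal{P}_j f=(I-\mathcal{P}_0)f=\Pi_0 f.
\]

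For the second identity, the argument is symmetric. Given $f\in \Dom(\mathcal{L}^k)$, the element $h:=\mathcal{L}^k f=\sum_{j=1}^\infty j^k\mathcal{P}_j f\in L^2(\gamma)$ satisfies $\mathcal{P}_j h=j^k\mathcal{P}_j f$ for $j\ge 1$ and $\mathcal{P}_0 h=0$, so applying the bounded operator $\mathcal{L}^{-k}$ gives
\[
\mathcal{L}^{-k}\mathcal{L}^k f=\sum_{j=1}^\infty j^{-k}\mathcal{P}_j h=\sum_{j=1}^\infty \mathcal{P}_j f=\Pi_0 f.
\]

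There is no real obstacle here; the proof is essentially a bookkeeping exercise in the spectral calculus. The only point requiring a small amount of care is the domain check in the first identity: one must confirm that $\mathcal{L}^{-k}f$ lies in $\Dom(\mathcal{L}^k)$ before applying $\mathcal{L}^k$, which follows immediately from Parseval since the exponents cancel. The interchange of $\mathcal{P}_j$ with the series is justified by continuity of the projection on $L^2(\gamma)$.
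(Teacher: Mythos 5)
Your proof is correct and follows essentially the same route as the paper: compute $\mathcal{P}_j(\mathcal{L}^{-k}f)=j^{-k}\mathcal{P}_j f$, verify via Parseval that $\mathcal{L}^{-k}f\in\Dom(\mathcal{L}^k)$, and observe that both compositions collapse to $\sum_{j\ge 1}\mathcal{P}_j f=(I-\mathcal{P}_0)f=\Pi_0 f$. No issues.
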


\begin{proof}
 Let us see that for $f\in L^2(\gamma),$ 
$\mathcal{L}^{-k}f\in \Dom(\mathcal{L}^k).$ From the definitions of $\mathcal{P}_j$ and $\mathcal{L}^{-k}$, and the orthonormality of $\{h_\alpha\}_{\alpha\in \mathbb N_0^n}$ we have 
\[\mathcal{P}_j(\mathcal{L}^{-k}f)= \frac{1}{j^k}\mathcal{P}_j f\] and 
thus 
\begin{align*}\sum_{j= 1}^\infty j^{2k}\|\mathcal{P}_j (\mathcal{L}^{-k}f)\|_2^2 & =\sum_{j= 1}^\infty j^{2k}\frac{1}{j^{2k}}\|\mathcal{P}_j f\|_2^2\\ & =\sum_{j= 1}^\infty\|\mathcal{P}_j f\|_2^2\\ & \le \sum_{j= 0}^\infty\|\mathcal{P}_j f\|_2^2\\
&=\|f\|_2^2<\infty.
\end{align*}
Hence, $\mathcal{L}^{-k}f\in \Dom(\mathcal{L}^k)$ and from the definition of $\mathcal{L}^k$ and $\mathcal{L}^{-k}$ we get that, for $f\in L^2(\gamma)$,
\[\mathcal{L}^k \mathcal{L}^{-k}f=\sum_{j=1}^\infty \mathcal{P}_j f= f-\int_{\mathbb{R}^n} f\, d\gamma=\Pi_0 f.\]

Similarly, 
for $f\in \Dom(\mathcal{L}^k),$ we get  \[\mathcal{L}^{-k}\mathcal{L}^k f=\Pi_0 f.\qedhere\]
\end{proof}
Particularly, from the above result, we have
\begin{equation*}
\mathcal{L}^k \mathcal{L}^{-k} f=f\quad \forall f\in L_0^2(\gamma),\quad \mathcal{L}^{-k} \mathcal{L}^k f= f\quad \forall f\in \Dom(\mathcal{L}^k)\cap L_0^2(\gamma).
\end{equation*}

Let us introduce several function spaces that will play an important role in the definition of special atoms and investigate the relationship among them.

Recall that $\mathcal{L}$ is an elliptic operator. Thus, for a given bounded open subset $\Omega$ of $\mathbb{R}^n,$ a positive integer~$k$ and a real constant $c,$ every solution $u$ of the equation
\[\mathcal{L}^k u=c\mathcal{X}_\Omega\]
is smooth in $\Omega$ by elliptic regularity. Here, $\mathcal{X}_\Omega$ denotes the characteristic function of $\Omega.$ 

We will now introduce some function spaces related with the solutions of the integer powers of $\mathcal{L}$.

\begin{defn} Suppose that $k$ is a positive integer and that $\Omega$ is a bounded open subset of $\mathbb{R}^n.$ We say that a function $u$ is $k$--\emph{quasi-harmonic} on $\Omega$ if $\mathcal{L}^k u$ is constant on $\Omega$ (in the sense of distributions, hence in the classical sense, since $u$ is smooth by elliptic regularity). We shall denote by $q_k^2(\Omega)$
the space of $k$--quasi-harmonic functions on $\Omega$ which belong to $L^2(\gamma)$.

The subspace of $q_k^2(\Omega)$ of all functions $v$ such that $\mathcal{L}^kv=0$ on $\Omega$ will be denoted by $h_k^2(\Omega).$
\end{defn}

\begin{defn}
Given a compact subset $K\subseteq  \mathbb{R}^n$, we say that a function $v$ is $k$--\emph{quasi-harmonic} on $K$ if $v$ is the restriction to $K$ of a function in $q_k^2(\Omega)$ for some bounded open set $\Omega$ containing $K.$ We shall denote by $q_k^2(K)$ the space of all $k$--quasi-harmonic functions on $K.$ The subspace of all functions which are restrictions to $K$ of functions in $h_k^2(\Omega)$ will be denoted by $h_k^2(K).$ 
\end{defn}

\begin{rem}\label{remark: caracterizacion q}
Let $K$ be a compact subset of $\mathbb{R}^n$. Then
$q_k^2(K)^\bot=\left\{v\in L^2(\gamma): \mathcal{L}^{-k}v\in L^2_0(K,\gamma)\right\}$ being $L^2_0(K,\gamma)$ the space of functions $w\in L^{2}(\gamma)$ with $\supp w\subseteq  K$ and $\int w d\gamma=0.$
\end{rem}
The proof of this remark can be found in \cite{MMV12} for the  Laplace--Beltrami operator and $K=\overline{B}$ with $B$ a ball. Although it follows the same lines as the ones in \cite[Proposition~3.3(i)]{MMV12}, we will state it here for the sake of completeness.
\begin{proof}[Proof of Remark~\ref{remark: caracterizacion q}]
First let us prove that $q_k^2(K)^\bot$ is contained in $\{v\in L^2(\gamma): \mathcal{L}^{-k}v\in L^2_0(K,\gamma)\}.$
Let $v\in q_k^2(K)^\bot.$ In order to prove that the support of $\mathcal{L}^{-k}v$ is a subset of $K$ it suffices to show that 
$\langle \mathcal{L}^{-k}v,\mathcal{X}_{B}\rangle=0$ for every ball $B\subseteq \mathbb{R}^n\setminus K.$ 
Since $\mathcal{L}$ is self-adjoint then \[\langle \mathcal{L}^{-k}v,\mathcal{X}_B\rangle=\langle v, \mathcal{L}^{-k}\mathcal{X}_B\rangle .\] On the other hand, notice that $\mathcal{L}^{-k}\mathcal{X}_B\in {q}_k^2(K).$ Indeed, there exists a bounded open subset $\Omega$ of $\mathbb{R}^n$ with $K\subseteq  \Omega$ and $\mathcal{L}^k \mathcal{L}^{-k}\mathcal{X}_{B}=\mathcal{X}_B - \gamma (B)$ on $\Omega$; in particular, with $\mathcal{X}_B=0$ on $K.$ Thus the last inner product in the above equality vanishes. 

Now we prove that the function $\mathcal{L}^{-k}v$ has average $0$ with respect to $\gamma.$ Indeed, since $\supp( \mathcal{L}^{-k}v)\subseteq  K$ and $\mathcal{L}$ is self-adjoint, we have
\[\int_{\mathbb{R}^n}\mathcal{L}^{-k}v\, d\gamma=\langle \mathcal{L}^{-k}v,\mathcal{X}_{\Omega}\rangle=\langle v,\mathcal{L}^{-k}\mathcal{X}_{\Omega}\rangle.\]
Since $v\in q_k^2(K)^\bot$ and $\mathcal{L}^{-k}\mathcal{X}_\Omega\in q^2_k(K)$ then this last integral vanishes, as required. 

Next let us prove the other inclusion. We assume that for $v\in L^2(\gamma),$ $\mathcal{L}^{-k}v\in L_0^2(K,\gamma).$ Then $v\in \Dom (\mathcal{L}^k)$ and $v=\mathcal{L}^k \mathcal{L}^{-k}v.$ If we take $w\in q_k^2(K)$, there exists $\Omega$ a bounded open subset of $\mathbb{R}^n$ with $K\subseteq  \Omega$ such that $\mathcal{L}^kw$ is constant on $\Omega.$ Therefore $w$ turns out to be smooth on $\Omega,$ and 
\[\langle v, w\rangle = \langle \mathcal{L}^k\mathcal{L}^{-k}v, w\rangle= \langle \mathcal{L}^{-k}v, \mathcal{L}^k w\rangle=0.\]
The last equality is due to the fact that $\mathcal{L}^k w$ is constant on $\Omega$ and $\mathcal{L}^{-k}v\in L^2_0(K,\gamma).$   
\end{proof}

Everything that is said henceforth is understood in the sense of distributions, unless something else is specified. Let $u$ be a function in $\Dom(\mathcal{L}^k)$
such that it vanishes on the complement of $\overline{B}$ for $B$ a ball in $\mathbb{R}^n$. Then $\mathcal{L}^k u$ is in $L^2(\gamma)$ and vanishes on $\mathbb{R}^n\setminus \overline{B}.$
For every ball $B$ we introduce two operators $\mathcal{L}_B^k$ and $\mathcal{L}_{B,\textup{Dir}}^k$ defined as the restriction of $\mathcal{L}^k$ (in the distribution sense) to
\[\Dom(\mathcal{L}_B^k):= \{u\in \Dom(\mathcal{L}^k):\supp  u\subseteq  \overline{B}\},\]
\[\Dom(\mathcal{L}_{B,\textup{Dir}}^k):=\left\{u\in W_0^{2k-1,2}(B): \mathcal{L}^k u\in L^2(\gamma),\, \supp(\mathcal{L}^ku)\subseteq  B\right\},\]
respectively. Here, for $j\in \mathbb N$, we denote by $W_0^{j,2}(B)$ the closure of $C_c^\infty(B)=\{u\in C_c^\infty(\mathbb{R}^n): \supp u\subseteq B\}$ with respect to the norm 
\[\|u\|_{W_0^{j,2}(B)}=\left(\sum_{|\alpha|\leq j} \|D^\alpha u\|_{L^2(B)}^2\right)^{1/2}.\]
Notice that $W_0^{j,2}(B)=W_0^{j,2}(B,\gamma)$  since $B$ is a bounded set.

The following lemma gives an identification of the domain of $\mathcal{L}_B^k$, for any $k\in \mathbb N$. The case $k=1$ can be found in \cite[Lemma~2.6]{Bruno}.

\begin{lem}
Let $B$ be a ball in $\mathbb{R}^n.$ Then $\Dom(\mathcal{L}_B^k)=W_0^{2k,2}(B)$ with equivalence of norms.
\end{lem}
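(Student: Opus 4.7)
My plan is to induct on $k$, taking as base case $k=1$ the identification already cited from \cite[Lemma~2.6]{Bruno}. Assuming $\Dom(\mathcal{L}_B^k)=W_0^{2k,2}(B)$ is established, I would treat the two set inclusions of the $(k+1)$-st statement separately and then invoke the open mapping theorem to get the equivalence of norms.

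For the easy direction $W_0^{2k+2,2}(B)\subseteq \Dom(\mathcal{L}_B^{k+1})$, let $u\in W_0^{2k+2,2}(B)$ and choose $\varphi_n\in C_c^\infty(B)$ with $\varphi_n\to u$ in $W^{2k+2,2}$. Each $\varphi_n$ belongs to $\Dom(\mathcal{L}^m)$ for every $m$, and $\mathcal{L}^{k+1}\varphi_n$ results from applying to $\varphi_n$ a differential operator of order $2k+2$ with polynomial coefficients that are bounded on $\overline{B}$; consequently $\|\mathcal{L}^{k+1}\varphi_n\|_{L^2(\gamma)}\lesssim \|\varphi_n\|_{W^{2k+2,2}(B)}$. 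Closedness of $\mathcal{L}^{k+1}$ then places $u$ in $\Dom(\mathcal{L}_B^{k+1})$ with the desired norm control, and $\supp u\subseteq \overline{B}$ is automatic.

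The substantive direction $\Dom(\mathcal{L}_B^{k+1})\subseteq W_0^{2k+2,2}(B)$ is where the work lies. For $u\in \Dom(\mathcal{L}_B^{k+1})$, the spectral definition immediately gives $u\in \Dom(\mathcal{L})$ and $\mathcal{L} u\in \Dom(\mathcal{L}^k)$, while locality of $\mathcal{L}$ forces $\supp(\mathcal{L} u)\subseteq \overline{B}$. Applying the base case and the inductive hypothesis respectively, $u\in W_0^{2,2}(B)$ and $v:=\mathcal{L} u\in W_0^{2k,2}(B)$; denoting the zero-extensions to $\mathbb{R}^n$ by $\tilde u,\tilde v$, one has $\tilde u\in H^2(\mathbb{R}^n)$ and $\tilde v\in H^{2k}(\mathbb{R}^n)$. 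Because $\tilde u\in W^{2,2}(\mathbb{R}^n)$, the pointwise formula $\mathcal{L}\tilde u=-\tfrac12\Delta \tilde u+x\cdot\nabla\tilde u$ is defined almost everywhere, equals $v$ on $B$ and $0$ off $\overline B$, and therefore coincides with $\tilde v$ as an element of $L^2(\mathbb{R}^n)$. Since $\mathcal{L}$ has smooth coefficients and is uniformly elliptic on every bounded set, standard interior regularity applied on an open ball strictly containing $\overline B$ promotes $\tilde u$ from $L^2$ to $H^{2k+2}_{\textup{loc}}$, and compactness of $\supp \tilde u$ upgrades this to $\tilde u\in H^{2k+2}(\mathbb{R}^n)$; by the characterization of $W_0^{2k+2,2}(B)$ as the subspace of $H^{2k+2}(\mathbb{R}^n)$ consisting of functions supported in $\overline B$, this means $u\in W_0^{2k+2,2}(B)$.

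The main technical subtlety in this scheme is the identity $\mathcal{L}\tilde u=\tilde v$ on all of $\mathbb{R}^n$: without the $W_0^{2,2}$ vanishing of $u$ and $\nabla u$ on $\partial B$ supplied by the base case, zero-extension would introduce singular boundary distributions in $\mathcal{L}\tilde u$ and the interior regularity bootstrap would break down. Once both inclusions are in place, the equivalence of norms follows from the open mapping theorem applied to the continuous injection $W_0^{2k+2,2}(B)\hookrightarrow \Dom(\mathcal{L}_B^{k+1})$ just obtained.
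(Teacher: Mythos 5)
Your proof is correct and shares the paper's skeleton---induction on $k$ with Bruno's result as the base case and the identity $\Dom(\mathcal{L}_B^{k+1})=\{u\in \Dom(\mathcal{L}_B):\mathcal{L}u\in \Dom(\mathcal{L}_B^{k})\}$ as the inductive engine---but both inclusions are executed by genuinely different means. For the easy inclusion $W_0^{2k+2,2}(B)\subseteq \Dom(\mathcal{L}_B^{k+1})$ the paper uses the commutation relation $D^\alpha L=LD^\alpha-2|\alpha|D^\alpha$ to see directly that $\mathcal{L}$ maps $W_0^{2k+2,2}(B)$ into $W_0^{2k,2}(B)$, whereas you approximate by $C_c^\infty(B)$ functions and invoke closedness of $\mathcal{L}^{k+1}$; both work, and yours has the minor advantage of producing explicitly the continuity estimate that feeds the open mapping theorem. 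For the substantive inclusion the paper applies higher \emph{boundary} regularity (Evans) on $B$ itself, while you zero-extend, verify $\mathcal{L}\tilde u=\tilde v$ on all of $\mathbb{R}^n$ using the $H^2(\mathbb{R}^n)$ membership of $\tilde u$ supplied by the base case, and run \emph{interior} regularity on a larger ball. Your route buys a cleaner justification of the subscript $0$: boundary regularity alone yields $u\in H^{2k+2}(B)$, and one must still argue that the support condition upgrades this to $u\in W_0^{2k+2,2}(B)$ (a point the paper leaves implicit); your appeal to the characterization of $W_0^{m,2}(B)$ as the elements of $H^m(\mathbb{R}^n)$ supported in $\overline{B}$---valid since a ball satisfies the segment condition---closes that gap, at the price of having to know that characterization and the global identity $\mathcal{L}\tilde u=\tilde v$, which you correctly single out as the delicate step.
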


\begin{proof}
As said above, for the case $k=1$ we have the result by T. Bruno in \cite{Bruno}. For the general case we use induction on $k$ since $\Dom(\mathcal{L}_B^k)=\{u\in \Dom(\mathcal{L}_B): \mathcal{L} u\in \Dom(\mathcal{L}_B^{k-1})\}.$ Indeed, let us assume that $\Dom(\mathcal{L}_B^k)=W_0^{2k,2}(B).$ Then  $\Dom(\mathcal{L}_B^{k+1})=\{u\in \Dom(\mathcal{L}_B): \mathcal{L} u\in \Dom(\mathcal{L}_B^{k})=W_0^{2k, 2}(B)\}.$ Thus, given $u\in \Dom(\mathcal{L}_B^{k+1})$, we have $\mathcal{L}u=f$ with $f\in W_0^{2k,2}(B)$ and $\supp u\subseteq  \overline{B},$ so we get $u\in W_0^{2k+2,2}(B)$ (cf. \cite[Theorem 6.5]{Evans}). Conversely, taking into account that $D^\alpha L=L D^\alpha - 2|\alpha|D^\alpha$ and that $B$ is a bounded set, if $u\in W_0^{2k+2,2}(B)$ then $\mathcal{L} u\in W_0^{2k,2}(B)$, from this $u\in \Dom(\mathcal{L}_B^{k+1})$. So the case $k+1$ is also true.  
\end{proof}

In the spirit of \cite[Lemma~2.7]{Bruno} we can obtain the following lemma regarding several properties for the functions spaces previously defined, as well as the operators $\mathcal{L}_B^k$ and $ \mathcal{L}_{B,\textup{Dir}}^k$.

\begin{lem}\label{lema: propiedades de L^k, h_k^2 y q_k^2}
The following statements hold. 
\begin{enumerate}[label=(\roman*)]
\item \label{item: lema: propiedades de L^k, h_k^2 y q_k^2 - item 1}Both spaces $q_k^2(B)$ and $h_k^2(B)$ are closed subspaces of $L^2(B)$;
\item \label{item: lema: propiedades de L^k, h_k^2 y q_k^2 - item 2}$\mathcal{L}^k$ is a Banach space isomorphism between $\textup{Dom}(\mathcal{L}_B^k)$ and $h^{2}_k(\overline{B})^{\perp}$;
\item \label{item: lema: propiedades de L^k, h_k^2 y q_k^2 - item 3}$h_k^2(\overline{B})^{\perp}=h_k^2(B)^{\perp}$;
\item \label{item: lema: propiedades de L^k, h_k^2 y q_k^2 - item 4}$\textup{Ran}(\mathcal{L}_B^k)=h_k^2(B)^{\perp}$;
\item \label{item: lema: propiedades de L^k, h_k^2 y q_k^2 - item 5}$q_k^2(\overline{B})^{\perp}=q_k^2(B)^{\perp}$;
\item \label{item: lema: propiedades de L^k, h_k^2 y q_k^2 - item 6}$\textup{Dom}(\mathcal{L}_B^k)\subseteq \textup{Dom}(\mathcal{L}_{B,\textup{Dir}}^k)$.
\end{enumerate}
\end{lem}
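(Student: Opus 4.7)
The plan is to extend Bruno's argument for $k=1$ in \cite[Lemma~2.7]{Bruno} to general $k\in\mathbb{N}$, leveraging the identification $\Dom(\mathcal{L}_B^k)=W_0^{2k,2}(B)$ from the preceding lemma together with elliptic regularity for $\mathcal{L}^k$. For (i), closedness of $h_k^2(B)$ in $L^2(B)$ is immediate from the distributional continuity of $\mathcal{L}^k$: any $L^2(B)$-limit of solutions of $\mathcal{L}^k u=0$ is again a distributional solution, hence smooth by elliptic regularity. For $q_k^2(B)$, I first verify that the constants $c_j=\mathcal{L}^k u_j$ converge by pairing with a fixed test function supported in $B$, and then pass to the distributional limit.

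For (ii), injectivity rests on $\ker\mathcal{L}^k=\ker\mathcal{L}=\mathbb{C}$ and on the fact that a nonzero constant cannot be supported in $\overline{B}$. The inclusion $\mathcal{L}^k\Dom(\mathcal{L}_B^k)\subseteq h_k^2(\overline{B})^\perp$ follows from self-adjointness of $\mathcal{L}^k$: if $v\in h_k^2(\overline{B})$ extends $k$-harmonically to an open $\Omega\supset\overline{B}$ and $u\in\Dom(\mathcal{L}_B^k)$ is supported in $\overline{B}$, then the pairing $\langle\mathcal{L}^k u,v\rangle=\langle u,\mathcal{L}^k v\rangle$ localizes to $\Omega$ and vanishes there. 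Surjectivity is more delicate: given $f\in h_k^2(\overline{B})^\perp$, the fact that $1\in h_k^2(\overline{B})$ forces $f\in L^2_0(\gamma)$, so $u:=\mathcal{L}^{-k}f$ lies in $\Dom(\mathcal{L}^k)$ and solves $\mathcal{L}^k u=f$; the support property $\supp u\subseteq\overline{B}$ then comes from Remark~\ref{remark: caracterizacion q}, once items (iii) and (v) upgrade $f\in h_k^2(\overline{B})^\perp$ to $f\in q_k^2(\overline{B})^\perp$. The open mapping theorem then gives continuity of the inverse, completing the Banach space isomorphism.

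Items (iii) and (v) reduce to showing that $h_k^2(\overline{B})$ and $q_k^2(\overline{B})$ are $L^2(B)$-dense in $h_k^2(B)$ and $q_k^2(B)$ respectively; since $\mathcal{L}$ lacks the dilation symmetry exploited for the Laplacian in \cite[Proposition~3.3]{MMV12}, I would produce the approximating sequence by solving an interior Dirichlet problem for $\mathcal{L}^k$ on a ball slightly larger than $\overline{B}$ with data matching $v\in h_k^2(B)$ on a slightly smaller concentric ball, or alternatively by truncating a suitable eigenfunction expansion of $v$. Item (iv) then follows by combining (ii) with (iii). Finally, (vi) is immediate from the preceding lemma: $\Dom(\mathcal{L}_B^k)=W_0^{2k,2}(B)\subseteq W_0^{2k-1,2}(B)$, and $u$ vanishing outside $\overline{B}$ together with the vanishing traces encoded in $W_0^{2k,2}(B)$ forces the corresponding support condition on $\mathcal{L}^k u$.

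The main obstacle will be the density statements (iii) and (v), because the dilation argument that works for the Laplacian is unavailable for the Ornstein--Uhlenbeck operator; a substitute must be engineered, and the surjectivity half of (ii) hinges on that substitute being strong enough to equate the orthogonal complements of the $h_k^2$ and $q_k^2$ spaces at $B$ and at $\overline{B}$.
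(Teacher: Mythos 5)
The surjectivity half of your item (ii) contains a genuine gap. You propose to take $f\in h_k^2(\overline{B})^\perp$, set $u=\mathcal{L}^{-k}f$, and deduce $\supp u\subseteq\overline{B}$ from Remark~\ref{remark: caracterizacion q} ``once items (iii) and (v) upgrade $f\in h_k^2(\overline{B})^\perp$ to $f\in q_k^2(\overline{B})^\perp$.'' No such upgrade is possible: since $h_k^2(\overline{B})\subsetneq q_k^2(\overline{B})$ (the latter contains $\mathcal{L}^{-k}\psi\vert_{\overline B}$, which is quasi-harmonic but not $k$-harmonic), the complements are nested the \emph{other} way, $q_k^2(\overline{B})^\perp\subsetneq h_k^2(\overline{B})^\perp$, with codimension one. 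For $f$ in that codimension-one gap, $\mathcal{L}^{-k}(f\mathcal{X}_{\overline B})$ is genuinely \emph{not} supported in $\overline{B}$: the correct conclusion (and the one the paper proves) is only that it is \emph{constant} on $\overline{B}^c$, obtained by testing against $\phi\in C_c^\infty(\overline{B}^c)\cap L^2_0(\gamma)$ and using that $\mathcal{L}^{-k}\phi\in h_k^2(\overline{B})$. One must then subtract that constant, $g=\mathcal{L}^{-k}\tilde f-c$, to land in $\Dom(\mathcal{L}_B^k)$; since $\mathcal{L}^k c=0$, $g$ is still a preimage of $f$. Items (iii) and (v) cannot rescue your version; moreover, invoking them inside the proof of (ii) is circular in the standard architecture, where (iii)--(v) are themselves consequences of (ii) (one shows $\textup{Ran}(\mathcal{L}^k_B)\perp h_k^2(B)$ directly by approximating $u\in\Dom(\mathcal{L}^k_B)=W_0^{2k,2}(B)$ by $C_c^\infty(B)$ functions, which together with (ii) yields (iii) and (iv) simultaneously).

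Relatedly, your assessment that the ``main obstacle'' is a density statement for (iii) and (v) is off target: Bruno's route, which the paper follows, never needs $h_k^2(\overline{B})$ to be dense in $h_k^2(B)$; it only needs the orthogonality $\textup{Ran}(\mathcal{L}_B^k)\perp h_k^2(B)$ described above, which is elementary. Your proposed substitutes (an interior Dirichlet problem for $\mathcal{L}^k$, or truncation of an eigenfunction expansion) are not developed enough to assess and are in any case unnecessary. The remaining items are fine: your treatment of (i) (recovering the constants $c_j=\mathcal{L}^k u_j$ by pairing with a fixed test function) is a clean alternative to the paper's direct-sum decomposition $q_k^2(B)=h_k^2(B)\oplus\mathbb{C}(\mathcal{L}^{-k}\psi\vert_B)$, your injectivity argument via $\ker\mathcal{L}^k=\mathbb{C}$ is simpler than the paper's recursion, and (vi) is indeed immediate from $\Dom(\mathcal{L}_B^k)=W_0^{2k,2}(B)$.
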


\begin{proof}
Let us first prove item~\ref{item: lema: propiedades de L^k, h_k^2 y q_k^2 - item 1}. Clearly $h_k^2(B)$ is a subspace of $q_k^2(B)$ of co-dimension one. Indeed, we have the vector space decomposition 
\[q_k^2(B)=h_k^2(B)\oplus \mathbb{C}\left(\mathcal{L}^{-k}\psi|_B\right),\] being $\psi\in L_0^2(\gamma)\cap C_c^\infty (\mathbb{R}^n)$ such that $\psi=1$ on $B$, and $\mathbb{C}(\varphi)=\{c\varphi: c\in \mathbb{C}\}$. Let $u\in q_k^2(B)$. Then $\mathcal{L}^k u= c=c\psi $ on $B$. From the definition of $\psi$ we have $\mathcal{L}^k \mathcal{L}^{-k}\psi =\psi.$ If we take $v=u-c\mathcal{L}^{-k}\psi|_B$ we have that $\mathcal{L}^k v=\mathcal{L}^k u-c\mathcal{L}^k\mathcal{L}^{-k}\psi=0$ on $B$. Thus  $v\in h_k^2(B)$ and $u=v+ c \mathcal{L}^{-k}\psi|_B$. Since $\psi\in L_0^2(\gamma)$, we have that the sum is direct.

Let us prove that $q_k^2(B)$ and $h_k^2(B)$ are closed subspaces of $L^2(B).$ Due to the above decomposition it is enough to prove that $h_k^2(B)$ is closed. Indeed, let $\{v_j\}_{j\in \mathbb N}$ be a sequence on $h_k^2(B)$ that converges on $L^2(B)$ to $v\in L^2(B)$. Then $\mathcal{L}^k v_j$ converges to $\mathcal{L}^k v$ in the sense of distributions. Thus $\mathcal{L}^kv=0$ on $B,$ whence $v\in h_k^2(B)$.

We turn now our attention to item~\ref{item: lema: propiedades de L^k, h_k^2 y q_k^2 - item 2}. We first observe that $\mathcal{L}^k(\textup{Dom}(\mathcal{L}_B^k))\subseteq h_k^2(\overline{B})^\perp$. Let $f\in \textup{Dom}(\mathcal{L}_B^k)$ and fix $v\in h_k^2(\overline{B})$. Then there exist an open set $\Omega\supseteq \overline{B}$ and a function $w\in h_k^2(\Omega)$ such that $w\vert_{\overline{B}}=v$.
We can construct a function $\tilde{v}\in C_c^\infty(\mathbb{R}^n)\cap h_k^2(\mathbb{R}^n)$ such that $\tilde v=w$ on $\overline{B}$ and $\supp\,\tilde{v}\subseteq \Omega$. Observe that
\[\int_{\overline{B}} v\mathcal{L}^kf\,d\gamma=\int_{\overline{B}} w \mathcal{L}^k f\, d\gamma= \int_{\Omega} \tilde{v}\mathcal{L}^k f\, d\gamma= \int_{\mathbb{R}^n} \tilde{v}\mathcal{L}^kf\,d\gamma=\int_{\mathbb{R}^n}\mathcal{L}^k\tilde{v}\, f\,d\gamma=0,\]
where we have used that
\begin{align*}
    \int_{\mathbb{R}^n} \tilde{v}\mathcal{L}^kf\,d\gamma&=\int_{\mathbb{R}^n}\left(\sum_{\alpha}\langle \tilde{v},h_\alpha \rangle h_\alpha\right)\left(\sum_{\beta} |\beta|^k\langle f,h_\beta \rangle h_\beta\right) \,d\gamma\\
    &=\sum_\alpha\sum_\beta \langle \tilde{v},h_\alpha\rangle |\beta|^k\langle f,h_\beta\rangle\langle h_\alpha, h_\beta\rangle \\
    &= \sum_\alpha |\alpha|^k \langle \tilde{v} ,h_\alpha \rangle \langle f,h_\alpha\rangle\\
    &=\int_{\mathbb{R}^n}\mathcal{L}^k \tilde{v} f\,d\gamma.
\end{align*}

At this point we shall prove now that $\mathcal{L}_B^k$ is injective on $\textup{Dom}(\mathcal{L}_B^k)$. Indeed, if $f\in \textup{Dom}(\mathcal{L}_B^k)$ and $\mathcal{L}^k f=0$ we have that $\mathcal{L}(\mathcal{L}^{k-1}f)=0$, so $\mathcal{L}^{k-1}f$ is a constant $L^2$--function with support contained in $\overline{B}$. Therefore $\mathcal{L}^{k-1}f=0$. By proceeding recursively we get that  $f=0$.

As the next step we will prove that $\mathcal{L}^k$ maps $\textup{Dom}(\mathcal{L}_B^k)$ onto $h_k^2(\overline{B})^\perp$. Fix $v\in h_k^2(\overline{B})^\perp$ and define $\tilde v=v\mathcal{X}_{\overline{B}}$. Then $f=\mathcal{L}^{-k}(\tilde v)\in \textup{Dom}(\mathcal{L}^k)$ since $\mathcal{L}^{-k}: L^2(\gamma)\to \textup{Dom}(\mathcal{L}^k)$. We also have
\[\int_{\mathbb{R}^n}\tilde v\,d\gamma=\int_{\overline{B}} v\,d\gamma=0\] since $1\in h_k^2(\overline{B})$. Thus $\tilde v\in L_0^2(\gamma)$ and consequently 
\[\mathcal{L}^k f=\mathcal{L}^k\left(\mathcal{L}^{-k}\tilde v\right)=\Pi_0 \tilde v=\tilde v.\]
We now fix a test function $\phi\in C_c^\infty(\overline{B}^c)\cap L_0^2(\gamma)$. Then we can write
\[\langle \phi, f\rangle  =\left\langle \mathcal{L}^k(\mathcal{L}^{-k}\phi),\mathcal{L}^{-k}\tilde{v}\right\rangle=\left\langle \mathcal{L}^{-k}\phi,\tilde v\right\rangle=\int_{\overline{B}}\mathcal{L}^{-k}\phi v\,d\gamma =0,\]
since $\mathcal{L}^{-k}\phi \in h_k^2(\overline{B})$. By the arbitrariness of $\phi$ we can conclude that $f$ is constant on $\overline{B}^c$, that is, $f(x)=c$ for every $x\in \overline{B}^c$. The function $g=f-c$ belongs to $\textup{Dom}(\mathcal{L}^k)$, $\supp g\subseteq \overline{B}$ and 
\[\mathcal{L}^k g=\mathcal{L}^k(f-c)=\mathcal{L}^k f=\tilde v=v.\] 
Consequently, we have proved that the operator $\mathcal{L}^k$ is a bijection between $\textup{Dom}(\mathcal{L}_B^k)$ and $h_k^2(\overline{B})^\perp$. We also have that $\mathcal{L}^k$ is continuous since $\textup{Dom}(\mathcal{L}_B^k)=W^{2k,2}_0(\overline{B})$.

Let $T=\mathcal{L}^k: \textup{Dom}(\mathcal{L}_B^k)\to h_k^2(\overline{B})^\perp$.
We are going to show that $T^{-1}$ is continuous from $h_k^2(\overline{B})^\perp$ to $\textup{Dom}(\mathcal{L}_B^k)$. In order to prove that, we shall see that its graph is closed. Let $\{f_j\}_{j\in \mathbb N}$ be a sequence in $h_k^2(\overline{B})^\perp$ such that $f_j\to f$ when $j\to\infty$ on $L^2(\overline{B})$ and $T^{-1}f_j\to g$, where $g\in W_0^{2k,2}(\overline{B})$. By the surjectivity of $T^{-1}$ we have that there exists $\tilde f\in h_k^2(\overline{B})^\perp$ such that $g=T^{-1}\tilde f$. By the continuity of $T$ we get
\[f_j=T\left(T^{-1}f_j\right)\to Tg=\tilde f,\]
then we conclude that $f=\tilde f$. By the closed graph theorem we obtain that $T^{-1}$ is continuous.

The proof of items~\ref{item: lema: propiedades de L^k, h_k^2 y q_k^2 - item 3} to~\ref{item: lema: propiedades de L^k, h_k^2 y q_k^2 - item 6} follow similar lines as in \cite[Lemma~2.7]{Bruno} and we shall omit it.
\end{proof}

\section{Hardy type spaces and atoms}\label{seccion: atomos y hardys}

As mentioned in the introduction, the definition of the suitable atoms for the old Gaussian Riesz transforms will require to consider a family of balls 
such that the Gaussian measure $\gamma$ is doubling on such a family. 

 Given an Euclidean ball $B=B(c_B,r_B)$ where $c_B$ is its center, and $r_B>0$ its radius, we will say that $B$ is an \textit{admissible ball} if $r_B\leq m(|c_B|)$ where the function $m:\mathbb{R}^ +_0\to \mathbb{R}^+$ is defined as 
 \[m(s)=\begin{cases} 1, & 0\le s\le 1,\\ \frac1s, & s> 1.
 \end{cases}\]
The collection of these balls will be denoted by $\mathscr{B}_1$. For any constant $c>0$, by $cB$ we will denote the ball with same center as $B$ and $c$ times its radius.

As it is well-known (see, for instance, \cite[Proposition~2.1~and~Remark~2.2]{MM07}), the Gaussian measure is doubling on $\mathscr{B}_1$, that is, for any ball $B\in \mathscr{B}_1$, 
\begin{equation*}
    \gamma(2B)\leq D_\gamma \gamma(B),
\end{equation*}
 with doubling constant $D_\gamma>0$ independent of $B$.

We are now in position to define, for every $k\in\mathbb{N}$, an $X^k$--atom and its corresponding atomic Hardy type space $X^k(\gamma)$ in this setting.

\begin{defn}
Given $k\in \mathbb N$, we say that a function $a\in L^2(\gamma)$ is an $X^k$--atom if $a$ is supported on an admissible ball $B\in \mathscr{B}_1$, and verifies
\begin{enumerate}[label=(\alph*)]
    \item \label{item: def: condicion 1} $\|a\|_{L^2(\gamma)}\leq \omega_k(r_B)\gamma(B)^{-1/2}$, where $\omega_1(r_B)=\omega_2(r_B)=1$ and $\omega_k(r_B)=r_B^{k-2}$ for $k\geq 3$;
    \item \label{item: def: condicion 2} $a\in q_k^2(\overline{B})^\bot$. 
\end{enumerate}
\end{defn}

\begin{rem}\label{obs: acotacion norma dos de atomo mas fuerte}
Observe that condition \ref{item: def: condicion 2} implicitly says that $\int a\, d\gamma=0$ since $\mathcal{X}_{2B}\in q_k^2(\overline{B})$. On the other hand, since $\omega_k(r_B)\le 1$, for every $k$, we get from~\ref{item: def: condicion 1} that 
\begin{equation}\label{eq: old a condition}
\|a\|_{L^2(\gamma)}\le \gamma(B)^{-1/2}.
\end{equation}
\end{rem}

\begin{rem}\label{obs: contencion X^k atomos} It is easy to see that every $X^k$--atom is also an $X^j$--atom for every $1\leq j\leq k$.
\end{rem}

\begin{rem}
Condition~\ref{item: def: condicion 1} implies that each atom is in $L^1(w_k\gamma)$ with 
$$w_k(x)=\begin{dcases}
    1 & k=1,2;\\
    1+|x|^{k-2} & k\geq 3.
\end{dcases}$$ 
Indeed, for any $k\geq 1$,
\begin{equation}\label{eq: cota norma 1 pesada atomo}
    \|a\|_{L^1(w_k\gamma)}\le \|w_k\mathcal{X}_{B}\|_{L^2(\gamma)}\|a\|_{L^2(\gamma)}\le \|w_k\mathcal{X}_{B}\|_{L^2(\gamma)}\omega_k(r_B)\gamma(B)^{-1/2}\le 1+2^{k-2}.
\end{equation}

This will allow us to deduce the boundedness of the old Gaussian Riesz transforms just having proved the uniform boundedness of them on the atoms and taking into account that $R_\alpha$ with $|\alpha|=k$ are bounded from $L^1(w_k \gamma)$ into $L^{1,\infty}(\gamma)$ (see \cite{FHS09}). Let us also remark that in order to get the uniform boundedness of these old Gaussian Riesz transforms on atoms we will use the  weaker condition~\eqref{eq: old a condition}. 
\end{rem}

\begin{rem}
    We shall point out that if $a$ is an $X^k$--atom, then $\frac{a}{\vertiii{\mathcal{L}^{-k}}_2}$ is an $\mathcal{H}^1$--atom supported in $\overline{B}.$ Here $\vertiii{\, \cdot\, }_2$  stands for the norm on the Banach space of bounded linear operators defined on $L^2(\gamma).$
    
    Indeed, since $a\in q_k^2(\overline{B})^\bot,$ by Remark~\ref{remark: caracterizacion q}, $\mathcal{L}^{-k}a\in L^2_0(\overline{B},\gamma).$ Moreover, by \eqref{eq: old a condition} \[\|\mathcal{L}^{-k}a\|_{L^2(\gamma)}\le \vertiii{\mathcal{L}^{-k}}_2\, \|a\|_{L^2(\gamma)}\le \vertiii{\mathcal{L}^{-k}}_2\, \gamma(B)^{-1/2}.\]
\end{rem}

Since we are expecting a strong endpoint type result for the old Gaussian Riesz operator, we shall consider a subset of $L^1(w_k\gamma)$, given below.

\begin{defn}
Given $k\in \mathbb N$, the Hardy space $X^k(\gamma)$ is defined by
\[X^k(\gamma):=\left\{f\in L^1(\gamma): f=\sum_{j\in \mathbb N} \lambda_j a_j,\ a_j\text{ an }X^k\text{--atom }\forall j\in \mathbb N, \{\lambda_j\}_{j\in \mathbb N}\in \ell^1\right\}.\]
The norm for this space is given by
\[\|f\|_{X^k(\gamma)}:=\inf \left\{\|\{\lambda_j\}\|_{\ell^1}:f=\sum_{j\in \mathbb N} \lambda_j a_j,\ a_j\text{ an }X^k\text{--atom }\forall j\in \mathbb N \right\}.\]
\end{defn}

As claimed,  $X^k(\gamma)\subseteq L^1(w_k \gamma)$, since  
 for $f\in X^k(\gamma)$ and from \eqref{eq: cota norma 1 pesada atomo} we get 
\[\|f\|_{L^1(w_k\gamma)}\le \sum_{i=1}^\infty |\lambda_i| \|a_i\|_{L^1(w_k\gamma)}\le \left(1+2^{k-2}\right)\sum_{i=1}^\infty |\lambda_i|.\]

\section{Auxiliary results}\label{seccion: auxiliares}

We shall see next that the operator $\mathcal{L}^{-k}$ preserves the support of $X^k$--atoms, which will be one of the key ingredients in the proof of Theorem~\ref{thm: teo principal} in the following section. We refer to \cite[Proposition~2.5]{Bruno} for the case $k=1$.

In the following, by $A\lesssim B$ we shall understand that there exists a positive constant $C$ such that $A\leq C B$, where $C$ may change in each occurrence.

\begin{prop}\label{propo: soporte de L-k y su norma 2}
Let $a$ be an $X^k$--atom supported on an admissible ball $B$. Then, $\supp(\mathcal{L}^{-k}a)\subseteq \overline{B}$ and 
\[\|\mathcal{L}^{-k}a\|_{L^2(\gamma)}\lesssim r_B^{2k}\gamma(B)^{-1/2}.\]
\end{prop}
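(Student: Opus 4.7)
The plan is to handle the support inclusion and the norm estimate separately.

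The support claim is immediate from Remark~\ref{remark: caracterizacion q}: since $a\in q_k^2(\overline{B})^{\perp}$, one has $\mathcal{L}^{-k}a\in L_0^2(\overline{B},\gamma)$, so $\supp(\mathcal{L}^{-k}a)\subseteq \overline{B}$.

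For the norm bound the plan is to reduce to $k$ iterations of the first-order estimate (the case $k=1$ of \cite[Proposition~2.5]{Bruno}) along the chain $u_j:=\mathcal{L}^{-j}a$, $j=0,1,\dots,k$, so that $u_0=a$ and $u_k=\mathcal{L}^{-k}a$. First I would note that $q_j^2(\overline{B})\subseteq q_k^2(\overline{B})$ for every $j\leq k$ (if $\mathcal{L}^j u$ is constant on $\overline{B}$ then $\mathcal{L}^k u=\mathcal{L}^{k-j}(\mathcal{L}^j u)=0$ there), hence $a\in q_k^2(\overline{B})^{\perp}\subseteq q_j^2(\overline{B})^{\perp}$, and Remark~\ref{remark: caracterizacion q} gives $u_j\in L_0^2(\overline{B},\gamma)$ for every $j$. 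A short spectral computation with the projectors $\mathcal{P}_\ell$ then yields $\mathcal{L}u_j=u_{j-1}$, so that $u_j\in \Dom(\mathcal{L}_B)=W_0^{2,2}(B)$ by the identification lemma proved earlier.

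The heart of the argument is the one-step recursion $\|u_j\|_{L^2(\gamma)}\lesssim r_B^{2}\,\|u_{j-1}\|_{L^2(\gamma)}$. This follows by testing $\mathcal{L}u_j=u_{j-1}$ against $u_j$ itself: using the factorization $\mathcal{L}=\sum_i \delta_i^*\delta_i$ together with $u_j\in W_0^{1,2}(B)$ to integrate by parts, one gets
\[
\langle u_{j-1},u_j\rangle_\gamma = \langle \mathcal{L}u_j,u_j\rangle_\gamma = \tfrac{1}{2}\|\nabla u_j\|_{L^2(\gamma)}^{2}.
\]
Cauchy--Schwarz on the left and the Poincaré inequality for $W_0^{1,2}(B)$ (which transfers from Lebesgue to Gaussian norms since the weight $e^{-|x|^2}$ is comparable to $e^{-|c_B|^{2}}$ throughout any admissible ball) then give
\[
\|u_j\|_{L^2(\gamma)}^{2}\lesssim r_B^{2}\,\|\nabla u_j\|_{L^2(\gamma)}^{2}\leq 2\,r_B^{2}\,\|u_{j-1}\|_{L^2(\gamma)}\,\|u_j\|_{L^2(\gamma)},
\]
yielding the claimed recursion. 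Iterating $k$ times and invoking $\|a\|_{L^2(\gamma)}\leq \gamma(B)^{-1/2}$ from Remark~\ref{obs: acotacion norma dos de atomo mas fuerte} closes the argument.

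The main technical point is that the Poincaré inequality really does pass to $L^2(\gamma)$ with a constant uniform in $B\in \mathscr{B}_1$. This reduces to the admissibility condition $r_B\leq m(|c_B|)$, which forces both $|c_B|\,r_B\lesssim 1$ and $r_B^{2}\lesssim 1$; consequently $|x|^{2}=|c_B|^{2}+O(1)$ on $B$, so $e^{-|x|^{2}}\asymp e^{-|c_B|^{2}}$ on $B$ with absolute constants, making the passage between the Lebesgue and the Gaussian norms a universal factor.
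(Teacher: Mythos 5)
Your proposal is correct, and for the norm estimate it takes a genuinely different route from the paper. The paper first uses items (iv)--(vi) of Lemma~\ref{lema: propiedades de L^k, h_k^2 y q_k^2} to realize $a\in q_k^2(\overline{B})^{\perp}\subseteq \textup{Ran}(\mathcal{L}_B^k)$, identifies $\mathcal{L}^{-k}a$ with $\mathcal{L}_{B,\textup{Dir}}^{-k}a$, and then bounds $\|\mathcal{L}_{B,\textup{Dir}}^{-k}a\|_{L^2(\gamma)}\leq (\lambda^\gamma_{\textup{Dir}}(B))^{-k}\|a\|_{L^2(\gamma)}$ via the discrete spectrum of the Dirichlet operator, deferring the crucial lower bound $\lambda^\gamma_{\textup{Dir}}(B)\gtrsim r_B^{-2}$ (uniform over $\mathscr{B}_1$) to Bruno's Proposition~2.5. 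You instead obtain the support inclusion directly from Remark~\ref{remark: caracterizacion q} and replace the spectral argument by $k$ iterations of the energy identity $\langle \mathcal{L}u_j,u_j\rangle_\gamma=\tfrac12\|\nabla u_j\|_{L^2(\gamma)}^2$ combined with the Poincar\'e inequality on $W_0^{1,2}(B)$; since the first Dirichlet eigenvalue bound \emph{is} the Poincar\'e inequality in variational form, the two arguments rest on the same mechanism, but yours is self-contained and makes explicit where the uniformity over admissible balls comes from (the comparability $e^{-|x|^2}\asymp e^{-|c_B|^2}$ on $B$, which indeed follows from $r_B|c_B|\leq 1$ and $r_B\leq 1$). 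The one point your iteration genuinely needs beyond the paper's setup --- that every intermediate $u_j=\mathcal{L}^{-j}a$ is supported in $\overline{B}$ and lies in $W_0^{2,2}(B)$ --- you supply correctly via the nesting $q_j^2(\overline{B})\subseteq q_k^2(\overline{B})$ (which is the content of Remark~\ref{obs: contencion X^k atomos}) together with Remark~\ref{remark: caracterizacion q} and the identification $\Dom(\mathcal{L}_B)=W_0^{2,2}(B)$; the price is a constant $(2C)^k$ in the final bound, which is harmless since $k$ is fixed.
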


\begin{proof}
Let $a$ be an $X^k$--atom. By items~\ref{item: lema: propiedades de L^k, h_k^2 y q_k^2 - item 4} and~\ref{item: lema: propiedades de L^k, h_k^2 y q_k^2 - item 5} of Lemma~\ref{lema: propiedades de L^k, h_k^2 y q_k^2} we obtain that
\[a\in q_k^2(\overline{B})^\perp=q_k^2(B)^\perp\subseteq h_k^2(B)^\perp=\textup{Ran}(\mathcal{L}_B^k).\]
Then there exists $f\in \textup{Dom}(\mathcal{L}_B^k)\subseteq \textup{Dom}(\mathcal{L}_{B,\textup{Dir}}^k)$, by virtue of item~\ref{item: lema: propiedades de L^k, h_k^2 y q_k^2 - item 6} of Lemma~\ref{lema: propiedades de L^k, h_k^2 y q_k^2}, such that $\mathcal{L}_{B,\textup{Dir}}^k f=\mathcal{L}_{B}^k f=a$. We have that $f=\mathcal{L}^{-k}_{B,\textup{Dir}}\,a$ since $\mathcal{L}^{k}_{B,\textup{Dir}}$ is one-to-one in $\textup{Dom}(\mathcal{L}_B^k)$. Consequently, $\supp\,\mathcal{L}_{B,\textup{Dir}}^{-k}\,a= \supp\, f\subseteq \overline{B}$ and further $\mathcal{L}^{-k}_{B,\textup{Dir}}\,a=\mathcal{L}^{-k}a$. Therefore, since $\mathcal{L}_{B,\textup{Dir}}$ has a discrete spectrum (cf. \cite[Theorem~10.13]{Grigo09}), so has $\mathcal{L}_{B,\textup{Dir}}^k$ and thus
\[\left\|\mathcal{L}^{-k}a\right\|_{L^2(\gamma)}=\left\|\mathcal{L}_{B,\textup{Dir}}^{-k}a\right\|_{L^2(\gamma)}\leq \left(\lambda^\gamma_{\textup{Dir}}(B)\right)^{-k}\|a\|_{L^2(\gamma)}\leq \left(\lambda^\gamma_{\textup{Dir}}(B)\right)^{-k}\gamma(B)^{-1/2},\]
where we have used \eqref{eq: old a condition} and $\lambda^\gamma_{\textup{Dir}}(B)$ denotes the first eigenvalue of $\mathcal{L}_{B,\textup{Dir}}$.
From this point on, we can proceed as in the proof of \cite[Proposition~2.5]{Bruno}.
\end{proof}

In order to prove Theorem~\ref{thm: teo principal} we will require the following lemma, proved in \cite{MM07}.

\begin{lem}[{\cite[Lemma~7.1]{MM07}}]\label{lem: lema para teo principal - 1}
Let $B=B(c_B, r_B)$ be a ball in $\mathbb{R}^n$. For $y\in B$ set
$r_{B,y}=\frac{r_B}{2|y|}$ for $y\neq 0$, and $r_{B,y}=\infty$ for $y=0$.
% $r_{B,y}=\left\{\begin{array}{ccl}
% \frac{r_B}{2|y|}&\text{if}&y\ne 0\\
% \infty&\text{if}&y=0
% \end{array}\right. .$ 
\begin{enumerate}[label=(\roman*)]
    \item \label{item: lem: lema para teo principal - 1 - item 1} If $r_{B,y}\geq 1$, then $4|x-ry|\geq |x-c_B|$, for every $r\in[0,1]$ and every $x\in (2B)^c$;
    \item \label{item: lem: lema para teo principal - 1 - item 2}if $r_{B,y}< 1$, then $4|x-ry|\geq |x-c_B|$, for every $r\in[1-r_{B,y},1]$ and every $x\in (2B)^c$;
    \item \label{item: lem: lema para teo principal - 1 - item 3} for every $\delta>0$ there exist positive constants $C_1$ and $C_2$ such that
    \[\frac{1}{(1-r^2)^{n/2}}\int_{(2B)^c} e^{-\frac{\delta|x-c_B|^2}{1-r^2}}\,dx\leq C_1\varphi_\delta\left(\frac{r_B}{\sqrt{1-r^2}}\right)\leq C_1e^{-\frac{C_2r_B^2}{1-r^2}},\]
    where $\varphi_\delta(s)=(1+s)^{n-2}e^{-\delta s^2}\leq c_ne^{-C_2s^2}$, for $s>0$.
\end{enumerate}
\end{lem}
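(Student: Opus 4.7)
The plan is to treat the three parts separately, since (i) and (ii) are elementary triangle-inequality estimates while (iii) is a standard Gaussian-tail computation.

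For \ref{item: lem: lema para teo principal - 1 - item 1} and \ref{item: lem: lema para teo principal - 1 - item 2}, the unifying idea is that under either hypothesis one can show $|ry - c_B| \le \tfrac{3}{2} r_B$, and this is what does all the work. First I would dispose of the trivial case $y=0$ of \ref{item: lem: lema para teo principal - 1 - item 1}: since $y\in B$ forces $|c_B|\le r_B$, the bound $|c_B|\le \tfrac{3}{2}r_B$ is immediate. In the case $y\neq 0$ with $r_{B,y}\ge 1$, i.e.\ $|y|\le r_B/2$, I would write
\[
|ry - c_B| \le |ry-y| + |y-c_B| = (1-r)|y| + |y-c_B| \le |y| + r_B \le \tfrac{3}{2}r_B
\]
valid for every $r\in[0,1]$. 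For \ref{item: lem: lema para teo principal - 1 - item 2}, when $r_{B,y}<1$ and $r\in[1-r_{B,y},1]$, the same calculation gives $(1-r)|y|\le r_{B,y}|y|=r_B/2$, hence again $|ry-c_B|\le \tfrac{3}{2}r_B$. Once this common bound is in hand, for $x\in(2B)^c$ one has $|x-c_B|>2r_B$, so $\tfrac{3}{2}r_B<\tfrac{3}{4}|x-c_B|$, and the triangle inequality yields
\[
|x-c_B| \le |x-ry| + |ry-c_B| \le |x-ry| + \tfrac{3}{4}|x-c_B|,
\]
which rearranges to the desired $4|x-ry|\ge |x-c_B|$.

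For \ref{item: lem: lema para teo principal - 1 - item 3} I would perform the change of variables $u = (x-c_B)/\sqrt{1-r^2}$, which converts the left-hand side into
\[
\int_{|u| > R} e^{-\delta|u|^2}\,du, \qquad R := \frac{2r_B}{\sqrt{1-r^2}},
\]
and then pass to polar coordinates to obtain $\omega_{n-1}\int_R^\infty t^{n-1}e^{-\delta t^2}\,dt$. A single integration by parts (or the standard incomplete-gamma estimate) gives $\int_R^\infty t^{n-1}e^{-\delta t^2}\,dt \lesssim (1+R)^{n-2}e^{-\delta R^2}$. Setting $s = r_B/\sqrt{1-r^2}$ so that $R=2s$, I would absorb the extra exponential decay $e^{-3\delta s^2}$ coming from $e^{-4\delta s^2}$ to re-express the estimate in the form $C_1(1+s)^{n-2}e^{-\delta s^2} = C_1\varphi_\delta(s)$, which is what is stated. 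The final inequality $\varphi_\delta(s)\le c_n e^{-C_2 s^2}$ is just the observation that $(1+s)^{n-2}e^{-(\delta-C_2)s^2}$ is bounded on $[0,\infty)$ for any $C_2\in(0,\delta)$, so one chooses $C_2$ slightly smaller than $\delta$.

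The only subtle point in the whole argument is the choice of the $r$-range in \ref{item: lem: lema para teo principal - 1 - item 2}: one has to notice that the threshold $1-r_{B,y}$ is exactly what makes $(1-r)|y|\le r_B/2$ so that $|ry-c_B|\le \tfrac{3}{2}r_B$ survives. The rest is routine. I would present (i) and (ii) together with the common bound highlighted, and then give (iii) as a short explicit computation.
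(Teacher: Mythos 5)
Your proof is correct. The paper itself does not prove this lemma --- it is quoted verbatim from \cite[Lemma~7.1]{MM07} --- but your argument, namely the uniform bound $|ry-c_B|\le \tfrac{3}{2}r_B$ under either hypothesis followed by the triangle inequality for \ref{item: lem: lema para teo principal - 1 - item 1}--\ref{item: lem: lema para teo principal - 1 - item 2}, and the change of variables plus Gaussian tail estimate in polar coordinates for \ref{item: lem: lema para teo principal - 1 - item 3}, is essentially the proof given in that reference.
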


The following result will be useful in the proof of Theorem~\ref{thm: teo principal 2}, as it takes care of the derivatives of $\mathcal{L}^{k/2}$ for odd orders $k$. When $k=1$, the proof can be found in \cite[Lemma~2.8]{Bruno}.

\begin{lem}\label{lem: lema para teo principal - 2}
Let $\alpha$ be a multi-index with $|\alpha|=k$, being $k$ an odd positive integer.  Then 
\[\left\|D^\alpha \mathcal{L}^{k/2}f\right\|_{L^1((4B)^c,\gamma)}\lesssim r_B^{-2k}\|f\|_{L^1(B,\gamma)},\] 
for every ball $B\in \mathscr{B}_1$ and every $f\in L^1(\gamma)$ such that $\supp f\subseteq  \overline{B}$.
\end{lem}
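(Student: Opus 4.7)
The plan is to adapt Bruno's $k=1$ argument (Lemma~2.8 in~\cite{Bruno}) via a subordination representation for the half-integer power $\mathcal{L}^{k/2}$. Since $k$ is odd, $m:=(k+1)/2$ is a positive integer, and the spectral identity $\lambda^{k/2}=\frac{1}{\sqrt{\pi}}\int_0^\infty t^{-1/2}\lambda^{m}e^{-t\lambda}\,dt$, valid for $\lambda>0$, translates on $L_0^2(\gamma)$ into
\[\mathcal{L}^{k/2}g=\frac{1}{\sqrt{\pi}}\int_0^\infty t^{-1/2}\mathcal{L}^{m}e^{-t\mathcal{L}}g\,dt.\]
Applying $D^\alpha$ and inserting the Mehler kernel representation of $e^{-t\mathcal{L}}$, the change of variable $r=e^{-t}$ yields, for $x\notin\overline{B}$,
\[D^\alpha\mathcal{L}^{k/2}f(x)=\int_B K(x,y)\,f(y)\,d\gamma(y),\qquad K(x,y)=\int_0^1\psi(r)\,D_x^\alpha N_m(x,y;r)\,dr,\]
where $N_m(x,y;r)$ is the kernel of $\mathcal{L}^m e^{-t\mathcal{L}}$ written in terms of $r$ and $\psi$ collects the jacobian and the $t^{-1/2}$ factor. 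The statement is first proved for $f\in L^2(\gamma)$ supported on $\overline{B}$ and extended to $f\in L^1(B,\gamma)$ by density, since the bound depends only on $\|f\|_{L^1(B,\gamma)}$.

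The second step is a pointwise estimate for $K$. Each of the $m$ differentiations in $t$ and the $k$ differentiations in $x$ of the Mehler kernel produces polynomial factors in $x$, $y$, $r$ and an additional inverse power of $1-r^2$, all multiplying the Gaussian $\exp\!\left(-c_0|y-rx|^2/(1-r^2)\right)$ intrinsic to $M_r$. Absorbing these polynomials into the Gaussian at the cost of a slightly smaller constant $c\in(0,c_0)$, one arrives at an estimate of the form
\[|K(x,y)|\lesssim \int_0^1 (1-r^2)^{-M_0}\exp\!\left(-\frac{c|y-rx|^2}{1-r^2}\right)\,dr,\qquad M_0=M_0(n,k).\]

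The third step is to integrate $|K(x,y)|$ in $x$ over $(4B)^c$, uniformly in $y\in B$. Splitting the $r$--range according to whether $r_{B,y}\geq 1$ or $r_{B,y}<1$, items~\ref{item: lem: lema para teo principal - 1 - item 1}--\ref{item: lem: lema para teo principal - 1 - item 2} of Lemma~\ref{lem: lema para teo principal - 1} replace $|y-rx|$ by $|x-c_B|$ in the Gaussian factor, and then item~\ref{item: lem: lema para teo principal - 1 - item 3} bounds the resulting $x$--integral by $(1-r^2)^{-M_0+n/2}e^{-C r_B^2/(1-r^2)}$. The substitution $u=r_B^2/(1-r^2)$ in the remaining $r$--integral turns it into a constant multiple of a power of $r_B^{-2}$, and a careful degree count shows that this power is precisely $r_B^{-2k}$. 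Fubini then delivers the claimed $L^1((4B)^c,\gamma)$ bound.

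\textbf{Main obstacle.} The delicate point is the bookkeeping of the factors produced by the $(k+1)/2$ time-derivatives and the $k$ spatial derivatives of the Mehler kernel. One must verify that the cumulative singularity in $(1-r^2)^{-1}$, balanced against the Gaussian decay and the admissibility condition $r_B\leq m(|c_B|)$ (which bounds $|y|$ on $B$ and keeps $c_0-c$ positive), yields \emph{exactly} the power $r_B^{-2k}$ after integration in $r$. This is also where the oddness of $k$ is crucial: it makes $(k+1)/2$ an integer, so that $\mathcal{L}^{m}e^{-t\mathcal{L}}=(-\partial_t)^{m}e^{-t\mathcal{L}}$ and the subordination formula above can be applied directly, without needing to handle an extra half-power of $\mathcal{L}$.
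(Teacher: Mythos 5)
Your overall architecture (kernel representation of $\mathcal{L}^{k/2}$, pointwise kernel estimate, then integration over $(4B)^c$ via Lemma~\ref{lem: lema para teo principal - 1}) matches the paper's, and your subordination route $\mathcal{L}^{k/2}=\pi^{-1/2}\int_0^\infty t^{-1/2}\mathcal{L}^{(k+1)/2}e^{-t\mathcal{L}}\,dt$ is a legitimate alternative derivation of the same kernel the paper simply quotes from Bruno (after $r=e^{-t}$ and integration by parts it reproduces the $(-\log r)^{-k/2-1}$ density). But the proposal has two genuine gaps.

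First, your pointwise bound keeps the Gaussian in the form $\exp\bigl(-c|y-rx|^2/(1-r^2)\bigr)$ and you then claim items \ref{item: lem: lema para teo principal - 1 - item 1}--\ref{item: lem: lema para teo principal - 1 - item 2} of Lemma~\ref{lem: lema para teo principal - 1} ``replace $|y-rx|$ by $|x-c_B|$''. Those items bound $|x-ry|$ from below, not $|rx-y|$; the latter inequality is false (take $r$ near $0$: $|rx-y|\approx|y|$ is bounded while $|x-c_B|$ is not). The missing step is the identity $-|rx-y|^2=-|x-ry|^2+(1-r^2)(|x|^2-|y|^2)$, which converts the Mehler Gaussian into $e^{|x|^2-|y|^2}e^{-|x-ry|^2/(1-r^2)}$. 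This is what simultaneously (a) produces the argument $|x-ry|$ to which Lemma~\ref{lem: lema para teo principal - 1} applies and (b) supplies the factor $e^{|x|^2}$ that cancels the weight $e^{-|x|^2}$ in the $L^1((4B)^c,\gamma)$ norm (and turns $dy$ into $d\gamma(y)$). Your sketch never addresses the Gaussian weight in $x$ at all. A related consequence: after this rewriting the polynomial factor has argument $\tfrac{rx-y}{\sqrt{1-r^2}}$ while the surviving Gaussian has argument $\tfrac{x-ry}{\sqrt{1-r^2}}$, so you \emph{cannot} simply absorb the polynomial into the exponential as you propose; the paper instead bounds $|H_\alpha(u)|\lesssim\sum_{j=0}^{(k-1)/2}|u|^{2j+1}$ (here oddness of $k$ enters a second time, beyond making $(k+1)/2$ an integer) and splits $|rx-y|\le|x-ry|+(1-r^2)|y|$, absorbing only the first piece and controlling the second via the admissibility bound $|y|\le 2/r_B$.

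Second, the entire quantitative content of the lemma --- the splitting of the $r$-integral at $1/2$ and, when $r_{B,y}<1$, again at $1-r_{B,y}$; the substitution $s=r_B/\sqrt{1-r^2}$; and the verification that the exponents $k-j-1\ge0$ work out so that the final power is exactly $r_B^{-2k}$ --- is exactly what you defer under ``Main obstacle.'' Since that computation is where the stated estimate is actually established (and where it could fail if the bookkeeping of the first point is done incorrectly), the proposal as written does not constitute a proof.
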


\begin{proof}For $k$ odd we have that $\mathcal{L}^{k/2}$ has the following kernel (see, for instance, \cite[p.~1612]{Bruno})
\[K_{\mathcal{L}^{k/2}}(x,y)=\frac{1}{\pi^{\frac n2}\Gamma\left(-\frac k2\right)}\int_0^1 (-\log r)^{-\frac k2-1} \left(\frac{e^{-\frac{|rx-y|^2}{1-r^2}}}{(1-r^2)^{\frac n2}}-e^{-|y|^2}\right)\frac{dr}{r}.\]
For $|\alpha|=k,$ by using the chain rule and the definition of the $n$-dimensional Hermite polynomials, we have 
\[D_x^\alpha \left(e^{-\frac{|rx-y|^2}{1-r^2}}\right)=\frac{r^k}{(1-r^2)^{\frac k2}} H_\alpha \left(\frac{rx-y}{\sqrt{1-r^2}}\right)e^{-\frac{|rx-y|^2}{1-r^2}},\] 
and taking into account that $-|rx-y|^2=-|x-ry|^2+(1-r^2)\left(|x|^2-|y|^2\right)$,
we get the following kernel
\begin{align*}
    D_x^\alpha K_{\mathcal{L}^{k/2}}(x,y)&=\frac{1}{\pi^{\frac n2}\Gamma\left(-\frac k2\right)}\int_0^1 \frac{(-\log r)^{-\frac k2-1}}{(1-r^2)^{\frac n2}} D_x^\alpha \left(e^{-\frac{|rx-y|^2}{1-r^2}}\right)\frac{dr}{r}\\
    &=\frac{1}{\pi^{\frac n2}\Gamma\left(-\frac k2\right)}\int_0^1 \frac{(-\log r)^{-\frac k2-1}r^{k-1}}{(1-r^2)^{\frac{n+k}{2}}} H_\alpha \left(\frac{rx-y}{\sqrt{1-r^2}}\right)e^{-\frac{|rx-y|^2}{1-r^2}}dr\\
    &=\frac{e^{|x|^2-|y|^2}}{\pi^{\frac n2}\Gamma\left(-\frac k2\right)}\int_0^1 \frac{r^{k-1}}{(-\log r)^{\frac{k+2}{2}}(1-r^2)^{\frac{n+k}{2}}} H_\alpha \left(\frac{rx-y}{\sqrt{1-r^2}}\right)e^{-\frac{|x-ry|^2}{1-r^2}}dr.
\end{align*}

Let $\ell\in \mathbb{N}_0$ be a non-negative integer such that $k=2\ell+1$. By using again the definition of Hermite polynomials on $\mathbb{R}^n,$ it is well-known that
\[|H_\alpha (u)|\lesssim  \sum_{j=0}^{\ell} |u|^{2j+1},\] for $u\in\mathbb{R}^n.$ Thus,
\[\left|D_x^\alpha K_{\mathcal{L}^{k/2}}(x,y)\right|\lesssim \sum_{j=0}^{\ell} e^{|x|^2-|y|^2} \int_0^1 \frac{r^{k-1}|rx-y|^{2j+1}}{(-\log r)^{\frac{k+2}{2}}(1-r^2)^{\frac{n+k+1}{2}+j}} e^{-\frac{|x-ry|^2}{1-r^2}}dr.\]

Fix $f\in L^1(\gamma)$ with $\supp f\subseteq  \overline{B}$ for some ball $B=B(c_B,r_B)\in \mathscr{B}_1$. Then,
\begin{align*}
\|D_x^\alpha \mathcal{L}^{k/2}f\|_{L^1((4B)^c,\gamma)}&\leq \int_{(4B)^c} \int_{B} |D_x^\alpha K_{\mathcal{L}^{k/2}}(x,y)||f(y)| dy\,  e^{-|x|^2} dx\\
&\lesssim \sum_{j=0}^\ell \int_{(4B)^c} \int_B \int_0^1 \frac{r^{k-1}|rx-y|^{2j+1}}{(-\log r)^{\frac{k+2}{2}}(1-r^2)^{\frac{n+k+1}{2}+j}} e^{-\frac{|x-ry|^2}{1-r^2}}dr |f(y)| d\gamma(y)dx\\
&:=\sum_{j=0}^\ell \int_B I_j(y) |f(y)|d\gamma(y),
\end{align*}
where
\begin{align*}
    I_j(y)&=\int_0^1 \frac{r^{k-1}}{(-\log r)^{\frac{k+2}{2}}(1-r^2)^{\frac{n+k+1}{2}+j}} \int_{(4B)^c} |rx-y|^{2j+1} e^{-\frac{|x-ry|^2}{1-r^2}} dx dr\\
    &:=\int_0^1 J(y,r)\,dr
=\int_0^{\frac12}J(y,r)\,dr+\int_{\frac12}^1J(y,r)\,dr\\
    &:=I_{j,1}(y)+I_{j,2}(y),
\end{align*}
for each $j=0,1,\dots, \ell$.

If we prove that
\begin{equation}\label{eq: cota I_j,i}
I_{j,i}(y)\lesssim r_B^{-2k}, \quad j=0,1,\dots, \ell, \;\, i=1,2
\end{equation}
we can conclude that
\[\|D_x^\alpha \mathcal{L}^{k/2}f\|_{L^1((4B)^c,\gamma)}\lesssim\ell r_B^{-2k}\|f\|_{L^1(B,\gamma)}\approx r_B^{-2k}\|f\|_{L^1(B,\gamma)}.\]

In order to estimate \eqref{eq: cota I_j,i}  for $I_{j,1}(y)$, let us consider $v=x-ry$ so $rx-y=rv+(r^2-1)y$ and $dx=dv$. Since $y\in B\in \mathscr{B}_1$, $|y|\leq |c_B|+r_B\leq \frac{2}{r_B}$. Then
\[|rx-y|\leq r|v|+(1-r^2)|y|\leq |v|+|y|\leq |v|+\frac{2}{r_B}\leq 2\frac{|v|+1}{r_B}.\]
We apply this and use that $1-r^2\geq \frac34$ and $\frac{r^{k-1}}{(-\log r)^{\frac{k+2}{2}}}\leq \frac{2^{1-k}}{(\log 2)^{\frac{k+2}{2}}}$ for $0<r\leq \frac12$, to get
\begin{align*}
    I_{j,1}(y)&\lesssim\int_0^{\frac12} \frac{r^{k-1}}{(-\log r)^{\frac{k+2}{2}}}\int_{\mathbb{R}^n} \frac{(|v|+1)^{2j+1}}{r_B^{2j+1}} e^{-|v|^2} dv dr\\
    &\lesssim \frac{1}{r_B^{2\ell+1}} \int_{\mathbb{R}^n}e^{-|v|^2/2} dv \lesssim \frac{1}{r_B^{k}}.
\end{align*}
Clearly, since $r_B\leq 1$, we also have $I_{j,1}(y)\lesssim r_B^{-2k}$, for every $j=0,1,\dots, \ell$.

When $\frac12<r<1$, we have that $-\log r\approx 1-r^2$. By splitting \[|rx-y|=|r(x-ry)-(1-r^2)y|\leq |x-ry|+(1-r^2)|y|,\] we have
\begin{align}
    I_{j,2}(y)&\lesssim  \int_{\frac12}^1 \frac{1}{(1-r^2)^{\frac{k+2}{2}}(1-r^2)^{\frac{n+k}{2}}} \int_{(4B)^c} \left(\frac{|rx-y|}{\sqrt{1-r^2}} \right)^{2j+1} e^{-\frac{|x-ry|^2}{1-r^2}} dx dr\nonumber\\
    &\lesssim \int_{\frac12}^1 \frac{1}{(1-r^2)^{\frac n2+k+1}} \int_{(4B)^c} \left[\left(\frac{|x-ry|}{\sqrt{1-r^2}}\right)^{2j+1}+\left(\sqrt{1-r^2} |y|\right)^{2j+1}\right] e^{-\frac{|x-ry|^2}{1-r^2}} dx dr\nonumber\\
    &\lesssim \int_{\frac12}^1 \frac{1+\left(\sqrt{1-r^2}|y|\right)^{2j+1}}{(1-r^2)^{k+1}} \left(\frac{1}{(1-r^2)^{\frac n2}} \int_{(4B)^c} e^{-\frac{|x-ry|^2}{2(1-r^2)}} dx\right) dr.\label{eq: estimacion I_j,2}
\end{align}
Now, let $r_{B,y}$ be the number defined in Lemma~\ref{lem: lema para teo principal - 1} and consider the cases $r_{B,y}\geq 1$ and $r_{B,y}< 1$.

If $r_{B,y}\geq 1$, for any $x\in (4B)^c$ and $r\in \left(\frac12,1\right)$ we have that $|x-ry|\geq \frac14 |x-c_B|$ by Lemma~\ref{lem: lema para teo principal - 1}~\ref{item: lem: lema para teo principal - 1 - item 1}. Hence, from Lemma~\ref{lem: lema para teo principal - 1}~\ref{item: lem: lema para teo principal - 1 - item 3}
\[\frac{1}{(1-r^2)^{\frac n2}} \int_{(4B)^c} e^{-\frac{|x-ry|^2}{2(1-r^2)}} dx\leq \frac{1}{(1-r^2)^{\frac n2}} \int_{(4B)^c} e^{-c\frac{|x-c_B|^2}{1-r^2}} dx=C_1\varphi_c\left(\frac{r_B}{\sqrt{1-r^2}}\right)\leq C_1 e^{-C_2\frac{r_B^2}{1-r^2}},
\]
being $\varphi_c(s)=(1+s)^{n-2}e^{-cs^2}$.

This leads to
\begin{align*}
   I_{j,2}(y)&\lesssim \int_{\frac12}^1 \frac{1+\left(\sqrt{1-r^2}|y|\right)^{2j+1}}{(1-r^2)^{k+1}} e^{-C_2\frac{r_B^2}{1-r^2}} dr\\
   &\lesssim  \int_{\frac12}^1 \frac{1+\left(\sqrt{1-r^2}|y|\right)^{2j+1}}{(1-r^2)^{k-\frac12}} e^{-C_2\frac{r_B^2}{1-r^2}}\frac{2r}{(1-r^2)^{\frac32}}dr.
\end{align*}
Setting $s=r_B/\sqrt{1-r^2}$, $ds/r_B=r dr/(1-r^2)^{\frac32}$, and recalling that $r_B|y|\leq \frac12$, we get
\begin{align*}
   I_{j,2}(y)&\lesssim \int_{\frac12}^1 \frac{1+\left(\sqrt{1-r^2}|y|\right)^{2j+1}}{\left(\sqrt{1-r^2}\right)^{2k-1}} e^{-C_2\frac{r_B^2}{1-r^2}}\frac{r}{(1-r^2)^{\frac32}}dr\\
   &\lesssim \int_0^\infty \frac{1+\left(\frac{r_B}{s}|y|\right)^{2j+1}}{\left(\frac{r_B}{s}\right)^{2k-1}} e^{-C_2s^2} \frac{ds}{r_B}\\
   &=\frac{1}{r_B^{2k}} \int_0^\infty \frac{s^{2j+1}+(r_B|y|)^{2j+1}}{s^{2j+1}} s^{2k-1}e^{-C_2s^2}ds\\
   &\lesssim \frac{1}{r_B^{2k}} \int_0^\infty \left(s^{2k-1}+s^{2(k-j-1)}\right) e^{-C_2s^2} ds\\
   &\lesssim \frac{1}{r_B^{2k}},
\end{align*}
since $k-j-1\geq \frac{k-1}{2}\geq 0$ for every $0\leq j\leq \ell= \frac{k-1}{2}$. This gives estimate~\eqref{eq: cota I_j,i} when $r_{B,y}\geq 1$.

We now study the case $r_{B,y}<1$. We split the integral $I_{j,2}(y):=I_{j,2,1}(y)+I_{j,2,2}(y)$ on $\left(\frac12,1-r_{B,y}\right)$ and $(1-r_{B,y},1)$, respectively. For the second one, we can apply Lemma~\ref{lem: lema para teo principal - 1}~\ref{item: lem: lema para teo principal - 1 - item 2} and \ref{item: lem: lema para teo principal - 1 - item 3} and proceed as in the previous case.

On the interval $\left(\frac12,1-r_{B,y}\right)$, we can proceed as in \eqref{eq: estimacion I_j,2} to get
\[I_{j,2,1}(y)\lesssim  \int_{\frac12}^{1-r_{B,y}} \frac{1+(1-r^2)^{j+\frac12}|y|^{2j+1}}{(1-r^2)^{k+1}} \left(\frac{1}{(1-r^2)^{\frac n2}} \int_{(4B)^c} e^{-\frac{|x-ry|^2}{2(1-r^2)}} dx\right) dr.\]
We perform the change of variables $v=\frac{x-ry}{\sqrt{2(1-r^2)}}$, $dv=\frac{dx}{(2(1-r^2))^{\frac n2}}$, in order to obtain
\begin{align*}
    I_{j,2,1}(y)&\approx \int_{\frac12}^{1-r_{B,y}} \frac{1+(1-r^2)^{j+\frac12}|y|^{2j+1}}{(1-r^2)^{k+1}} \int_{\mathbb{R}^n} e^{-|v|^2} dv dr\\
    &\approx \int_{\frac12}^{1-r_{B,y}} \frac{1+(1-r)^{j+\frac12}|y|^{2j+1}}{(1-r)^{k+1}}dr\\
    &\leq  \frac{1}{r_{B,y}^k}+|y|^{2j+1}\int_{\frac12}^{1-r_{B,y}} (1-r)^{j-k-\frac12}dr\\
    &\lesssim \frac{1}{r_{B,y}^k}+\frac{|y|^{2j+1}}{r_{B,y}^{k-j-\frac12}}\\
    &\approx \frac{|y|^k}{r_B^k}+\frac{|y|^{k+j+\frac12}}{r_B^{k-j-\frac12}}\\
    &=\left(\frac{|y|}{r_B}\right)^k\left(1+(|y|r_B)^{j+\frac12}\right).
\end{align*}
Since $|y|\leq |c_B|+r_B\leq \frac{2}{r_B}$ and $|y|r_B\le 2$, we get
\[I_{j,2,2}(y)\lesssim r_B^{-2k}.\]
The proof is now concluded.
\end{proof}

\section{Proofs of Theorems~\ref{thm: teo principal} and~\ref{thm: teo principal 2}}\label{seccion: resultados principales}

For proving the main results, we may need the integral representations of both Gaussian Riesz transforms $R_\alpha$ and $R_\alpha^*$. Given a multi-index $\alpha\in \mathbb{N}_0^n$, we have %shall be dealing with the Riesz transforms of order $\alpha$ defined by
\[R_\alpha f(x)=\textrm{p.v.}\int_{\mathbb{R}^n} k_\alpha(x,y)f(y)\,dy,\]
and
\[R_\alpha^* f(x)=\textrm{p.v.}\int_{\mathbb{R}^n} k_\alpha^*(x,y)f(y)\,dy,\]
where
\begin{equation*}
  k_\alpha(x,y)=c_{n,\alpha}\int_0^1 r^{|\alpha|-1}\left(\frac{-\log r}{1-r^2}\right)^{|\alpha|/2-1}H_\alpha\left(\frac{y-rx}{\sqrt{1-r^2}}\right)\frac{e^{-\frac{|y-rx|^2}{1-r^2}}}{(1-r^2)^{n/2+1}}\,dr 
\end{equation*}
and
\begin{equation*}
  k_\alpha^*(x,y)=c_{n,\alpha} e^{|x|^2-|y|^2}\int_0^1 \left(\frac{-\log r}{1-r^2}\right)^{|\alpha|/2-1}H_\alpha\left(\frac{x-ry}{\sqrt{1-r^2}}\right)\frac{e^{-\frac{|x-ry|^2}{1-r^2}}}{(1-r^2)^{n/2+1}}\,dr,
\end{equation*}
respectively.

The operator $R_\alpha$ turns out to be bounded on $L^p(\gamma),$ for $1<p<\infty,$ (see, for instance,   \cite{FSU}, \cite{GST}, \cite{Meyer}).
For the first order Gaussian Riesz transforms $R_{e_i}^*$, $i=1,\dots,n$, the $L^p(\gamma)$ boundedness was obtained in \cite{FSS} for $1<p<\infty$. By means of Meyer's multiplier theorem, the ``new'' higher order Gaussian Riesz transforms are also bounded on $L^p(\gamma)$, as can be proved similarly to \cite[Corollary 9.14]{Urbina}.

\begin{proof}[Proof of Theorem~\ref{thm: teo principal}]
  First we are going to prove that the old higher order Gaussian Riesz transforms $R_\alpha$ with $|\alpha|=k$ are uniformly bounded on $L^1(\gamma)$ when applied to  every $X^k$--atom. Once this is done the boundedness of $R_\alpha$ from $X^k(\gamma)$  into $L^1(\gamma)$ would follow from the knowledge that these transforms are bounded from $L^1(w_k\gamma)$ into $L^{1,\infty}(\gamma)$ (see \cite{FHS09}). Indeed, by following steps closely to the ones done in, for instance, \cite{BDQS2} we get for $f=\sum_j \lambda_j a_j$ with $a_j$ an $X^k$--atom and $\sum_j |\lambda_j|<\infty$ that the series defining $f$ converges on $L^1(w_k\gamma)$ and therefore $R_\alpha f=\lim_{\ell\to \infty} R_\alpha \left(\sum_{j=1}^\ell \lambda_j a_j\right)$ on $L^{1,\infty}(\gamma).$

Then there exists an increasing function  $\psi:\mathbb{N}\to \mathbb{N}$ such that $$R_\alpha f (x)=\lim_{\ell\to \infty}R_\alpha \left(\sum_{j=1}^{\psi(\ell)} \lambda_j a_j\right)(x) \ \text{a.e.}\  x\in \mathbb{R}^n.$$
Thus, $$|R_\alpha f(x)|=\lim_{\ell\to \infty}\left|\sum_{j=1}^{\psi(\ell)}\lambda_j R_\alpha a_j(x)\right|\le \sum_{j=1}^\infty |\lambda_j||R_\alpha a_j (x)|.$$ 
From this $$\|R_\alpha f\|_{L^1(\gamma)}\le \sum_{j=1}^\infty |\lambda_j|\|R_\alpha a_j\|_{L^1(\gamma)}\le C \sum_{j=1}^\infty |\lambda_j|,$$ where $C$ is an absolute constant independent of the atoms on which any $f\in X^k(\gamma)$ is decomposed as we shall see it in what follows. Therefore we get that $\|R_\alpha f\|_{L^1(\gamma)}\le C \|f\|_{X^k(\gamma)}.$

Let us then prove that 
\begin{equation}\label{eq: uniforme acotacion Riesz atomos}
\|R_\alpha a\|_{L^1(\gamma)}\leq C
\end{equation}
 for every $X^k$--atom $a$. For $k$ even, let us call $k=2j$. Then the ``old'' Gaussian Riesz transforms of even order now become
$\nabla^{2j} \mathcal{L}^{-j}$ with $\nabla^{2j} =\sum_{|\alpha|=2j}D^\alpha.$ 

Let $a$ be an $X^k$--atom. Then, by Remark~\ref{obs: contencion X^k atomos}, $a$ is also an $X^j$--atom. Since $a$ is supported in a critical ball $B$, by Proposition~\ref{propo: soporte de L-k y su norma 2} we have that $\mathcal{L}^{-j}a$ is supported in $\overline{B}$. By applying H\"older inequality and the boundedness of $\nabla^{2j} \mathcal{L}^{-j}$ on $L^2(\gamma)$ we get that
\[\|\nabla^{2j} \mathcal{L}^{-j} a\|_{L^1(\gamma)}\le \|\nabla^{2j} \mathcal{L}^{-j} a\|_{L^2(\gamma)}\gamma(B)^{1/2}\lesssim  \|a\|_{L^2(\gamma)}\gamma(B)^{1/2}\lesssim 1,\]
where we have also used Remark~\ref{obs: acotacion norma dos de atomo mas fuerte}.
This takes care of the boundedness from $X^k(\gamma)$ to $L^1(\gamma)$ of the ``old'' $k$-th order Gaussian Riesz transforms for $k$ even.

We now turn to the proof of the boundedness of this operator for $k$ odd. Given an $X^{k}$--atom $a$, notice that
\[\|R_\alpha a\|_{L^1(\gamma)}\leq \|R_\alpha a\|_{L^1(4B,\gamma)}+\|R_\alpha a\|_{L^1((4B)^c,\gamma)}.\]
For the first term on the right-hand side we apply again Hölder inequality, together with the $L^2(\gamma)$ boundedness of $R_\alpha$. This yields 
\[\|R_\alpha a\|_{L^1(4B,\gamma)}\leq \|R_\alpha a\|_{L^2(4B,\gamma)}\gamma(4B)^{1/2}\lesssim \| a\|_{L^2(\gamma)}\gamma(4B)^{1/2}\lesssim 1,\]
where in the last inequality we have used that $a$ is an $X^k$--atom and $\gamma$ is doubling over admissible balls. 

To take care of the term $\|R_\alpha a\|_{L^1((4B)^c,\gamma)}$, we write $D^\alpha \mathcal{L}^{-k/2}a=D^\alpha \mathcal{L}^{k/2}(\mathcal{L}^{-k}a).$
Since $a$ is an $X^k$--atom, we get $\supp \mathcal{L}^{-k}a\subseteq \overline{B}$  and 
\[\gamma(B)^{1/2}\|\mathcal{L}^{-k}a\|_{L^2(\gamma)}\lesssim r_B^{2k},\]
by Proposition~\ref{propo: soporte de L-k y su norma 2}. 
By applying Lemma~\ref{lem: lema para teo principal - 2} and the inequality above with ${f=\mathcal{L}^{-k}a}$, we get 
\begin{align*}
  \|D^\alpha \mathcal{L}^{-k/2}a\|_{L^1((4B)^c,\gamma)}&=\left\|D^\alpha \mathcal{L}^{k/2}f\right\|_{L^1((4B)^c,\gamma)}\\
  &\lesssim r_B^{-2k}\|f\|_{L^1(B,\gamma)}\\
  &\lesssim r_B^{-2k}\gamma(B)^{1/2}\|\mathcal L^{-k} a\|_{L^2(\gamma)}\\
  &\lesssim 1.
\end{align*}
This proves \eqref{eq: uniforme acotacion Riesz atomos} for every $k\in\mathbb N$.
\end{proof}

\medskip

\begin{proof}[Proof of Theorem~\ref{thm: teo principal 2}]
Let us point out here that in order to prove the boundedness of the new Gaussian Riesz transforms from $\mathcal{H}^1(\gamma)$ into $L^1(\gamma),$ 
 we will not be able to apply \cite[Theorem~6.1(ii) and Remark~6.2]{MM07} for if we consider $m(x,y)$ being the kernel associated to the old Gaussian Riesz transforms, $m^*(x,y)$ does not represent the kernel associated to the new higher order ones. So, given an $\mathcal{H}^1$--atom $a$, in order to prove this result we will proceed as in the proof of the boundedness of the old higher order Gaussian Riesz transforms of odd order, by splitting the norm in the following way
\begin{equation*}
\|R^*_\alpha a\|_{L^1(\gamma)}\le \|R^*_\alpha a\|_{L^1(2B,\gamma)}+\|R^*_\alpha a\|_{L^1((2B)^c, \gamma)}.
\end{equation*}
Then, the first term of the sum is bounded by the $L^2(\gamma)$-norm of $R^*_\alpha a$ times $\gamma^{1/2}(2B)$ and we use the continuity of $R_\alpha^*$ on $L^2(\gamma)$ and the fact that $\gamma$ is a doubling measure over $\mathscr{B}_1.$ 

Now, for taking care of the second term of the above sum we use the fact that the atom $a$ has its support contained in $B$ and it has average zero over that ball. This yields
\begin{align*}
\|R^*_\alpha a\|_{L^1((2B)^c,\gamma)}&\le C \int_B |a(y)| \int_{(2B)^c}\left|\int_0^1
\frac{\lambda_\alpha (r)}{(1-r^2)^{n/2+1}}(F_\alpha(x,y,r)-F_\alpha(x,c_B,r))dr\right| dxd\gamma(y)
\end{align*}
being $\lambda_\alpha(r)=\left(\frac{-\log r}{1-r^2}\right)^{\frac{|\alpha|}{2}-1}$ and 
$F_\alpha(x,y,r)=H_\alpha \left(\frac{x-ry}{\sqrt{1-r^2}}\right)e^{-\frac{|x-ry|^2}{1-r^2}}.$

By applying the mean value theorem to the function $F_\alpha$ in the variable $y$, we get that
$$\|R_\alpha^* a\|_{L^1((2B)^c,\gamma)}\lesssim \nu_s \|a\|_{L^1(\gamma)}\le \nu_s\|a\|_{L^2(\gamma)}\gamma^{1/2}(B)\lesssim \nu_s$$
being $$\nu_s=\sup_{B\in \mathscr{B}_1}\sup_{y\in B}r_B \int_{(2B)^c} s(x,y) dx$$ and \begin{align*}
   s(x,y)&%=c_n  \int_0^1 \frac{\lambda_\alpha (r)}{(1-r^2)^{n/2+1}} H_\alpha \left(\frac{x-ry}{\sqrt{1-r^2}}\right)e^{-\frac{|x-ry|^2}{1-r^2}}dr\\
=c_n \int_0^1 \frac{\lambda_\alpha(r)}{(1-r^2)^{n/2+1}}|\nabla_y F_\alpha(x,y,r)| dr.
\end{align*}

Now we prove the finiteness of $\nu_s.$
% In order to prove the boundedness of the new Gaussian Riesz transforms from $\mathcal{H}^1(\gamma)$ into $L^1(\gamma),$ 
% \red{since they are bounded on $L^2(\gamma)$, we can apply \cite[Theorem~6.1(ii) and Remark~6.2]{MM07}. We replace $\nu_m$} by $$\nu_m'=\sup_{B\in \mathscr{B}_1}\sup_{y\in B}r_B \int_{(2B)^c} |\nabla_y m(x,y)|d\gamma(x)$$ being
% \begin{align*}
%     m(x,y)&=c_n  e^{|x|^2}\int_0^1 \frac{\lambda_\alpha (r)}{(1-r^2)^{n/2+1}} H_\alpha \left(\frac{x-ry}{\sqrt{1-r^2}}\right)e^{-\frac{|x-ry|^2}{1-r^2}}dr\\
%     &:=c_n e^{|x|^2}\int_0^1 \frac{\lambda_\alpha(r)}{(1-r^2)^{n/2+1}}F_\alpha(x,y,r) dr
% \end{align*}
%  and $\lambda_\alpha(r)=\left(\frac{-\log r}{1-r^2}\right)^{\frac{|\alpha|}{2}-1}$.
%
%Now, f
For every $i=1,\dots,n$ \[\partial_{y_i}  F_\alpha(x,y,r)=2r\left(\alpha_iH_{\alpha-e_i}\left(\frac{x-ry}{\sqrt{1-r^2}}\right)-\frac{x_i-ry_i}{\sqrt{1-r^2}}H_\alpha\left(\frac{x-ry}{\sqrt{1-r^2}}\right)\right)\frac{e^{-\frac{|x-ry|^2}{1-r^2}}}{\sqrt{1-r^2}},\]
which leads to 
\[|\nabla_y F_{\alpha}(x,y,r)|\lesssim\left|P\left(\frac{x-ry}{\sqrt{1-r^2}}\right)\right|\frac{e^{-\frac{|x-ry|^2}{1-r^2}}}{\sqrt{1-r^2}},\]
where $P$ is a polynomial of degree $|\alpha|$. Therefore, 
\[|\nabla_y F_{\alpha}(x,y,r)|\lesssim\frac{e^{-\frac{|x-ry|^2}{2(1-r^2)}}}{\sqrt{1-r^2}}.\]
Consequently,
\begin{align*}
\nu_s&\lesssim \sup_{B\in \mathscr{B}_1}\sup_{y\in B} r_B \int_{(2B)^c}\int_0^1 \frac{\lambda_\alpha(r)}{(1-r^2)^{3/2}} \frac{e^{-\frac{|x-ry|^2}{2(1-r^2)}}}{(1-r^2)^{n/2}}dr dx  \\ &=\sup_{B\in \mathscr{B}_1}\sup_{y\in B} r_B \int_0^1 \frac{\lambda_\alpha(r)}{(1-r^2)^{3/2}}\int_{(2B)^c}\frac{e^{-\frac{|x-ry|^2}{2(1-r^2)}}}{(1-r^2)^{n/2}}dx dr
\end{align*}
In order to see that $\nu_s<\infty$ it will be enough to show that for every ball $ B\in \mathscr{B}_1$ and every $y\in B$,
\begin{equation*}
I=\int_0^1 \frac{\lambda_\alpha(r)}{(1-r^2)^{3/2}}\int_{(2B)^c}\frac{e^{-\frac{|x-ry|^2}{2(1-r^2)}}}{(1-r^2)^{n/2}}\,dx\,dr\lesssim \frac{1}{r_B}.
\end{equation*}

Let $r_{B,y}$ be the number defined in Lemma~\ref{lem: lema para teo principal - 1} and assume that $r_{B,y}\geq 1$. By applying items~\ref{item: lem: lema para teo principal - 1 - item 1} and~\ref{item: lem: lema para teo principal - 1 - item 3} in Lemma~\ref{lem: lema para teo principal - 1}, and the fact that $\lambda_\alpha(r)\leq C_{\alpha,n}r^{-1/2}$ for any $r\in (0,1/2)$, we get
\begin{align}
 I&\lesssim \int_0^1 \frac{\lambda_\alpha(r)}{(1-r^2)^{3/2}}e^{-C_2\frac{r_B^2}{1-r^2}}\,dr\nonumber\\
  &\lesssim \int_0^{1/2}\lambda_\alpha(r)\,dr+\int_{1/2}^1 \frac{e^{-\frac{C_2r_B^2}{2(1-r^2)}}}{(1-r^2)^{3/2}}\,dr\label{eq: estimacion exponencial}\\
  &\leq \sqrt{2} C_{\alpha,n}+\frac{2}{r_B}\int_{0}^\infty e^{-C_2u^2}\,du\nonumber\\
  &\lesssim  1+\frac{1}{r_B}\nonumber\\
  &\lesssim \frac{1}{r_B}.\nonumber
\end{align}

We turn now our attention to the case $r_{B,y}<1$. We split the integral $I$ into three parts, $I_1, I_2$ and $I_3$, being
\begin{align*}
I_1&=\int_0^{1/2}\frac{\lambda_\alpha(r)}{(1-r^2)^{3/2}}\int_{(2B)^c}\frac{e^{-\frac{|x-ry|^2}{2(1-r^2)}}}{(1-r^2)^{n/2}}\,dx\,dr,\\
I_2&=\int_{1/2}^{1-r_{B,y}}\frac{\lambda_\alpha(r)}{(1-r^2)^{3/2}}\int_{(2B)^c}\frac{e^{-\frac{|x-ry|^2}{2(1-r^2)}}}{(1-r^2)^{n/2}}\,dx\,dr,\\
I_3&=\int_{1-r_{B,y}}^{1}\frac{\lambda_\alpha(r)}{(1-r^2)^{3/2}}\int_{(2B)^c}\frac{e^{-\frac{|x-ry|^2}{2(1-r^2)}}}{(1-r^2)^{n/2}}\,dx\,dr.
\end{align*}
We shall estimate every term above separately.  We apply item~\ref{item: lem: lema para teo principal - 1 - item 2}  from Lemma~\ref{lem: lema para teo principal - 1} and the change of variable $z=|x-c_B|/\sqrt{1-r^{2}}$ to have
\[I_1\lesssim \int_0^{1/2}\lambda_\alpha(r)\int_{\mathbb{R}^n}\frac{e^{-\frac{|x-c_B|^2}{2(1-r^2)}}}{(1-r^2)^{n/2}}\,dx\,dr=\left(\int_0^{1/2}\lambda_\alpha(r)\,dr\right)\left(\int_{\mathbb{R}^n}e^{-\frac{|z|^2}{2}}\,dz\right)\lesssim 1\leq \frac{1}{r_B},\]
where we have again used the integrability of $\lambda_\alpha$ as in \eqref{eq: estimacion exponencial}.

On the other hand, since $\lambda_\alpha(r)\leq C_\alpha$ for $r\in [1/2,1]$, and $1-r^2\approx 1-r$ on $[1/2,1-r_{B,y}]$, by repeating the argument for $I_1$ on the inner integral, we get
\begin{align*}
I_2\lesssim\int_{1/2}^{1-r_{B,y}}\frac{1}{(1-r)^{3/2}}\int_{\mathbb{R}^n}\frac{e^{-\frac{|x-ry|^2}{2(1-r^2)}}}{(1-r^2)^{n/2}}\,dx\,dr&\lesssim \left(\int_{r_{B,y}}^{1/2} u^{-3/2}\, du\right)
\left(\int_{\mathbb{R}^n}e^{-\frac{|z|^2}{2}}\,dz\right)\\
&\lesssim \frac{1}{r_{B,y}^{1/2}}=\left(\frac{2|y|}{r_B}\right)^{1/2}\\
&\lesssim 1+\frac{1}{r_B}\lesssim \frac{1}{r_B},
\end{align*}
since $|y|^{1/2}\leq r_B^{1/2}+r_B^{-1/2}$.

Finally, the bound for $I_3$ follows as in the case $r_{B,y}\geq 1$ by using  item~\ref{item: lem: lema para teo principal - 1 - item 2} in Lemma~\ref{lem: lema para teo principal - 1} and proceeding similarly as in the estimate 
 for the second term of \eqref{eq: estimacion exponencial}.

Taking into account that these new higher order Gaussian Riesz transforms are also bounded from $L^1(\gamma)$ into $L^{1,\infty}(\gamma)$ (see \cite{AFS}) and proceeding as before when we have other types of continuity, these operators extend boundedly to the whole atomic space $\mathcal{H}^1(\gamma)$.
\end{proof}

%\section*{Declarations}
%
% \subsection*{Ethical approval}
% Not applicable.
% \subsection*{Competing interests}
% The authors have no relevant financial or non-financial interests to disclose.
% \subsection*{Author's contributions}
% All authors whose names appear on the submission made substantial contributions to the conception and design of the work, drafted the work and revised it critically, approved this version to be published, and agree to be accountable for all aspects of the work in ensuring that questions related to the accuracy or integrity of any part of the work are appropriately investigated and resolved.
% % \subsection*{Funding}
% % The authors were supported by PICT 2019-0389 (ANPCyT), CAI+D 2020 50320220100210 (UNL) and PIP 11220200101916CO (CONICET) 
% \subsection*{Availability of data and materials}
% Not applicable.

\end{document}